\documentclass[a4paper,12pt]{amsart}
\usepackage{amssymb,graphicx}
%\AtEndDocument{\vfill\eject\batchmode}% Suppress font verbiage at end of output
\addtolength{\textwidth}{3.5cm}
\addtolength{\oddsidemargin}{-15mm}
\addtolength{\evensidemargin}{-20mm}
\addtolength{\topmargin}{-1.5cm}
\addtolength{\textheight}{2.5cm}
\addtolength{\footskip}{3mm}
\DeclareSymbolFont{script}{U}{eus}{m}{n}
\DeclareMathSymbol{\Wedge}{0}{script}{"5E}
\DeclareMathAlphabet{\mathrmsl}{OT1}{cmr}{m}{sl}
\newcommand{\R}{\mathbb R}

%

%
%{{\tiny #1}}
\newcommand{\trace}{\mathop{\mathrmsl{tr}}\nolimits}
\newcommand{\grad}{\mathop{\mathrmsl{grad}}\nolimits}
\newcommand{\Id}{\mathrmsl{Id}}
\newcommand{\Scal}{\mathrmsl{Scal}}
\newcommand{\Sol}{\mathrmsl{Sol}}
\newcommand{\Lam}{{\mathrmsl\Lambda}}
\newcommand{\Hom}{\mathrmsl{Hom}}
\newcommand{\Nu}[1]{{\mathcal N}_{#1}}
\newcommand{\nablat}{\widetilde\nabla}
\newcommand{\gt}{\widetilde g}
\newcommand{\C}{{\mathbb C}}
\renewcommand{\d}{{\mathrmsl d}}
\newcommand{\CC}{\mathrmsl{C}_\C}
\newcommand{\cC}{{\mathcal C}}
\newcommand{\cD}{{\mathcal D}}
\newcommand{\cL}{{\mathcal L}}
\theoremstyle{plain}
\newtheorem{thm}{Theorem}
\newtheorem*{thm*}{Theorem}
\newtheorem{lem}{Lemma}
\newtheorem{prop}{Proposition}
\newtheorem{cor}{Corollary}

\theoremstyle{definition}
\newtheorem{defn}{Definition}

\theoremstyle{remark}
\newtheorem{rem}{Remark}

\title[Curvature and the c-projective mobility of K\"ahler metrics]
{Curvature and the c-projective mobility\\ of K\"ahler metrics
with hamiltonian $2$-forms}
%\keywords{K\"ahler metric, projective geometry, curvature}
%\subjclass[2010]{53B35; 53C55, 53B10, 53A20, 32J27, 53C25} 
\author{David M.J. Calderbank}
\address{DMJC: Department of Mathematical Sciences\\ University of Bath\\
Bath BA2 7AY\\ UK.}
\email{D.M.J.Calderbank@bath.ac.uk}
\author{Vladimir S. Matveev}
\author{Stefan Rosemann}
\address{VSM, SR: Institute of Mathematics\\ FSU Jena\\ 07737 Jena Germany}
\email{vladimir.matveev@uni-jena.de, stefan.rosemann@uni-jena.de}
\begin{document}

\begin{abstract}
The mobility of a K\"ahler metric is the dimension of the space of metrics
with which it is c-projectively equivalent. The mobility is at least two if
and only if the K\"ahler metric admits a nontrivial hamiltonian $2$-form.
After summarizing this relationship, we present necessary conditions for a
K\"ahler metric to have mobility at least three: its curvature must have
nontrivial nullity at every point. Using the local classification of K\"ahler
metrics with hamiltonian $2$-forms, we describe explicitly the K\"ahler
metrics with mobility at least three and hence show that the nullity condition
on the curvature is also sufficient, up to some degenerate exceptions. In an
Appendix, we explain how the classification may be related, generically, to
the holonomy of a complex cone metric.
\end{abstract}

\maketitle
%\tableofcontents

\section*{Introduction}

This article weaves together two threads in K\"ahler geometry which have been
running in parallel for 40--60 years with remarkably little interaction, given
their common themes.

The first thread concerns a notion of projective equivalence between K\"ahler
metrics. The classical notion is too strong when applied to K\"ahler metrics: 
if two metrics that are hermitian with respect to the same almost complex
structure have the same geodesics, they have the same Levi-Civita
connection.  In 1954, Otsuki and Tashiro~\cite{OtTash54} introduced a complex,
but non-holomorphic, version of projective equivalence, which acquired the
unfortunate name of ``holomorphically projective'' or ``h-projective''
equivalence in the literature. We prefer the term ``c-projective'', which is
intended to suggest ``complex projective'', without implying that the
geometry is holomorphic.

\begin{defn}\label{defn:cproj} Let $(M,J)$ be a complex manifold of real
  dimension $2m\geq 4$.  Then two $J$-hermitian K\"ahler metrics $g,\gt$ on
  $M$, with Levi-Civita connections $\nabla, \nablat$, are called
  \emph{c-projectively equivalent} if there is a $1$-form $\Phi$ such that
\begin{equation}\label{eq:cproj}
\nablat_X Y-\nabla_X Y=\Phi(X)Y+\Phi(Y)X-\Phi(JX)JY-\Phi(JY)JX
\end{equation}
for all vector fields $X,Y$.
\end{defn}
This notion has been extensively studied by Russian and Japanese schools
(see~\cite{Mikes} for a list of references up to 1998). One common theme
has been the relationship between special curvature properties of a K\"ahler
metric and the existence of metrics c-projectively equivalent to it
(e.g.~\cite{IshTach61}).

\smallbreak
The second thread concerns the explicit construction of ``optimal'' K\"ahler
metrics on complex manifolds, generalizing the constant curvature metrics used
in the uniformization of Riemann surfaces. The idea to seek such metrics goes
back to Calabi's famous conjectures in the 1950s (e.g.,~\cite{Calabi54}), but
the problem was attacked primarily using analytical methods until the late
1970s. Then Calabi provided fresh impetus by introducing the notion of an
extremal K\"ahler metric and constructing explicit examples on total spaces of
complex projective line bundles~\cite{Calabi79,Calabi82}.  Calabi's
construction has been refined and extended considerably by many authors (see
e.g.,~\cite{Abreu,HwSing}), providing a rich supply of K\"ahler metrics with
special curvature properties (such as extremal K\"ahler metrics). These
generalizations have in common that they introduce first order structure to
simplify the second (and higher) order partial differential equations that
describe curvature. A single source for this structure was identified
in~\cite{ACG1}, where it was observed that Calabi's construction and its
generalizations reflect the presence of a nontrivial solution to an
overdetermined linear differential equation, called a hamiltonian $2$-form.

\begin{defn} Let $(M, g, J,\omega)$ be a K\"ahler manifold of real dimension
$2m\geq 4$. Then a (real) $J$-invariant $2$-form $\phi$ on $M$ is
{\it hamiltonian} if
\begin{equation} \label{eq:ham}
\nabla_X \phi = \frac12 (\d \trace_\omega\phi \wedge JX^\flat
- J\d \trace_\omega \phi \wedge X^\flat)
\end{equation}
for all vector fields $X$, where $X^\flat=g(X,\cdot)$, $JX^\flat=-X^\flat\circ
J=(JX)^\flat$, and $\trace_\omega\phi=g(\omega,\phi)$ is the trace of
$\phi$ with respect to the K\"ahler form $\omega$.
\end{defn}
K\"ahler manifolds with hamiltonian $2$-forms are classified locally
in~\cite{ACG1} and globally in~\cite{ACGT2}, with applications to extremal
K\"ahler metrics in~\cite{ACGT3}.

\smallbreak

The origins of the present article are somewhat serendipitous. In April 2011,
the first author was asked to referee the article~\cite{MatRos1} by the second
and third authors, which proves that the only compact c-projective manifold
with a one parameter subgroup of ``essential'' symmetries is complex
projective space. This drew the first author's attention to the ``main
equation'' of c-projective equivalence (equation~\eqref{eq:main} below), which
is manifestly equivalent to the equation for hamiltonian $2$-forms (see
Remark~\ref{rem:affine}).

As noted in the published version of~\cite{MatRos1}, this equivalence has two
main ramifications. First, the organizing principle observed in~\cite{ACG1} to
underpin explicit constructions of K\"ahler metrics coincides with the notion
of a c-projectively equivalent metric, a topic studied independently for many
years previously. Secondly, the classification results in~\cite{ACG1,ACGT2}
solve open problems in the theory of c-projective equivalence, as well as
providing new examples.

Our interest here is in a third ramification: although the methodologies
employed in the theories of c-projective equivalence and hamiltonian $2$-forms
have a large overlap (e.g., as both depend upon the theory of overdetermined
PDEs of finite type), they have quite different flavours which might be
combined with profit to prove new results. This article is a first attempt to
exploit both theories in this way.

We focus on the \emph{mobility} $D(g,J)$ of a K\"ahler metric $g$ on $(M,J)$,
which is the dimension of the space $\Sol(g,J)$ of solutions of
equation~\eqref{eq:main}---or equivalently equation~\eqref{eq:ham}. Since the
identity map $\Id$ (corresponding to the K\"ahler form $\omega$) is always a
solution, $D(g,J)\geq 1$, and the presence of an independent solution (or a
nontrivial hamiltonian $2$-form) means equivalently that $D(g,J)\geq 2$.

Our plan is to study the case $D(g,J)\geq 3$, using \cite[Theorem 5]{FKMR},
quoted as Theorem~\ref{thm:extsystem} below, which states that any such
K\"ahler metric $g$ is $\CC(B)$ (for some $B\in\R$) in the sense of
Definition~\ref{defn:CCB} (unless all solutions of~\eqref{eq:main} are
parallel).  The converse is not true: it is straightforward to construct
$\CC(B)$ metrics with mobility $2$ (e.g., using the cone construction
described in the appendix---see~\S\ref{subsec:Dgeq3_rederived}).  In
Theorems~\ref{thm:equivalence} and~\ref{thm:condforCCB} we establish necessary
and sufficient conditions for a K\"ahler metric to be $\CC(B)$, and then, in
Theorem \ref{thm:Dgeq3}, describe the additional conditions such that a
$\CC(B)$ metric $g$ has mobility $D(g,J)\geq 3$.

Whereas Theorem~\ref{thm:equivalence} draws upon curvature conditions from the
theory of c-projective equivalence, Theorem~\ref{thm:condforCCB} uses
hamiltonian $2$-form methods. It follows, in Corollary~\ref{cor:degree}, that
an extremal K\"ahler metric with mobility $\geq 3$ must have constant scalar
curvature.

Our results are closely related to the cone construction of~\cite{MatRos2},
cf.~\cite{Arm,Mikes}, discussed in Appendix A. More precisely, for $\CC(B)$
metrics with $B<0$ (and we may assume $B=-1$ by rescaling), this construction
gives an explicit isomorphism between $\Sol(g,J)$ and the space of parallel
hermitian endomorphisms on a complex cone $(\hat M,\hat g, \hat J)$ over
$(M,g,J)$, which we summarize in \S\ref{subsec:cone}.  The cone is a K\"ahler
manifold of dimension $\dim_{\C}M+1$ and $(M,g,J)$ may be recovered from it by
taking a K\"ahler quotient. It is known, at least since Eisenhart \cite{ei},
that the existence of a parallel hermitian endomorphism $\hat A$ on $\hat M$
is (locally) equivalent to a decomposition of $\hat M$ into a direct product
of K\"ahler manifolds.

In \S\ref{subsec:quotient}, we derive a formula for the K\"ahler quotient
metric $g$ in terms of radial and angular coordinates on $\hat M$ coming from
the decomposition of $\hat M$ induced by $\hat A$.  In
\S\ref{subsec:locclass_rederived}, we (partially) rederive the local
classification formula \eqref{eq:locclass} for $g$ relative to $A\in
\Sol(g,J)$ corresponding to $\hat A$; this yields another proof of (one
direction of) Theorem \ref{thm:condforCCB} by a direct calculation---see
Proposition~\ref{prop:nec}.  In~\S\ref{subsec:Dgeq3_rederived} we use the cone
construction to give an alternative proof of Theorem~\ref{thm:Dgeq3} for a
$\CC(-1)$ metric.

\section{C-projective equivalence and hamiltonian $2$-forms}
\label{sec:basics}

\subsection{C-projective equivalence and $\CC(B)$ metrics}
\label{subsec:basicequations}

Let $(M,J)$ be a complex manifold of real dimension $2m\geq 4$. For
$J$-hermitian metrics $g,\gt$ on $M$, we introduce the nondegenerate
$(g,J)$-hermitian (i.e., $g$-symmetric, $J$-complex-linear) endomorphism
\begin{equation}\label{eq:defA}
A(g,\gt):=\left(\frac{\det\gt}{\det g}\right)^{\frac{1}{2(m+1)}} \gt^{-1}g,
\end{equation}
where we view $g,\gt\colon TM\rightarrow T^* M$ as bundle isomorphisms.  A
fundamental observation by Domashev and Mike\v s~\cite{DomMik1978} is that $g$
and $\gt$ are c-projectively equivalent if and only if there is a vector field
$\Lam$ such that $A=A(g,\gt)$ satisfies the ``main equation''
\begin{align}\label{eq:main}
\nabla_X A=X^\flat\otimes\Lam+\Lam^\flat\otimes X
+JX^\flat\otimes J\Lam+J\Lam^\flat\otimes JX.
\end{align}
Conversely, a nondegenerate solution $A$ of \eqref{eq:main} determines a
K\"ahler metric
\begin{equation}
\gt=(\det A)^{-\frac{1}{2}}gA^{-1}
\end{equation}
(obtained by solving \eqref{eq:defA} with respect to $\gt$) c-projectively
equivalent to $g$. Since $\Id$ is always a solution of~\eqref{eq:main}, we can
add a multiple of $\Id$ to any solution $A$ to obtain (at least locally) a
solution which is nondegenerate. In this sense, the solutions $A$ of
\eqref{eq:main} are (locally, generically) in bijection with K\"ahler metrics
$\gt$ that are c-projectively equivalent to $g$.

\begin{defn}
The space of hermitian endomorphisms $A$ satisfying \eqref{eq:main} will be
denoted by $\Sol(g,J)$. The \emph{mobility}\footnote{In the classical
c-projective literature, this is known as the ``degree of mobility''.}
$D(g,J)$ of $(M,g,J)$ is the dimension of $\Sol(g,J)$.
\end{defn}

\begin{rem}\label{rem:affine}
Obviously, two metrics $g,\gt$ are affinely equivalent ($\nablat=\nabla$) if
and only if the endomorphism $A=A(g,\gt)$ is parallel.  By \eqref{eq:main}, if
the metrics are c-projectively equivalent, they are affinely equivalent if and
only if the vector field $\Lam$ is identically zero.

Taking the trace on both sides of equation \eqref{eq:main}, shows that 
\begin{align}\label{eq:lambda}
\Lam=\frac{1}{4}\grad_g\trace A,
\end{align}
hence~\eqref{eq:main} is a linear PDE system on $A$, which is equivalent to
equation~\eqref{eq:ham} for a hamiltonian $2$-form $\phi$ by writing
$g(AX,Y)=\phi(X,JY)$.

In \cite{ACG1,MatRos1}, the nonconstant eigenvalues $\xi_1,\ldots \xi_\ell$ of
$A$, considered as functions on $M$, are shown to be continuous, and smooth on
a dense open subset $M^{0}$. Moreover, their (complex) multiplicity on this
subset is one. Thus we can express $\Lam$ on $M^{0}$ as
\begin{align}\label{eq:lambda1}
\Lam=\frac{1}{2}\sum_{i=1}^{\ell}\grad_g \xi_i.
\end{align}
For each nonconstant eigenvalue $\xi_i$ of $A$, $\grad_g\xi_i$ lies in the
corresponding eigenspace (see \cite{ACG1,MatRos1}). Hence the vanishing of
$\Lam$ is equivalent to all eigenvalues of the endomorphism $A$ (considered as
functions on the manifold) being constant.
\end{rem}
An important standard result in c-projective geometry is the fact that
$J\Lam$ is Killing.
\begin{lem}\label{lem:lambdakilling}
Let $(M,g,J)$ be a K\"ahler manifold of real dimension $2m\geq 4$. Then for
any $A\in\Sol(g,J)$, the corresponding vector field $\Lam$ is holomorphic, and
$J\Lam$ is a Killing vector field---equivalently $\nabla\Lam$ is
$(g,J)$-hermitian.
\end{lem}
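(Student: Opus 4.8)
The plan is to reduce all three assertions to the single statement that the endomorphism $L\colon X\mapsto\nabla_X\Lam$ commutes with $J$, and then to extract this by differentiating the main equation~\eqref{eq:main} a second time. All the reductions use only $\nabla g=\nabla J=0$ and the $g$-skewness of $J$. Since $\Lam=\frac{1}{4}\grad\trace A$ is a gradient, $L$ is (a multiple of) the Hessian of $\trace A$ and is therefore automatically $g$-symmetric. A direct computation with the K\"ahler identities gives $(\cL_\Lam J)X=J\nabla_X\Lam-\nabla_{JX}\Lam=(JL-LJ)X$, so $\Lam$ is holomorphic iff $[L,J]=0$; and $\nabla_X(J\Lam)=J\nabla_X\Lam=JLX$, so $J\Lam$ is Killing iff $JL$ is $g$-skew iff $L$ is $(g,J)$-hermitian, i.e.\ $g$-symmetric and $J$-commuting. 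Given the automatic symmetry of $L$, all three assertions collapse to the single claim $[L,J]=0$, equivalently $\nabla_{JX}\Lam=J\nabla_X\Lam$.

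To obtain this, I would differentiate~\eqref{eq:main} once more. Using $\nabla g=\nabla J=0$, the terms involving the undifferentiated slot cancel against the connection correction, leaving the tensorial second derivative
\[
(\nabla^2A)(X,Y)=Y^\flat\otimes LX+(LX)^\flat\otimes Y+JY^\flat\otimes JLX+(JLX)^\flat\otimes JY.
\]
Antisymmetrizing in $X,Y$ and invoking the Ricci identity yields $[R(X,Y),A]=\Theta(X,Y)$, where $\Theta(X,Y)$ denotes the expression above minus its $(X,Y)$-swap.

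The decisive step is to exploit the K\"ahler symmetry of the curvature: since $R(JX,JY)=R(X,Y)$, the left-hand side is invariant under $(X,Y)\mapsto(JX,JY)$. Splitting $L=L^{+}+L^{-}$ into its $J$-commuting part $L^{+}=\frac12(L-JLJ)$ and its $J$-anticommuting part $L^{-}=\frac12(L+JLJ)$, a short computation shows that the $L^{+}$-part of $\Theta$ is unchanged under $(X,Y)\mapsto(JX,JY)$ while the $L^{-}$-part changes sign; hence the $L^{-}$-part of $\Theta$ must vanish identically. This is the point I expect to be the main obstacle to spot: the obvious moves---taking the ordinary trace of $[R(X,Y),A]$, or using that the commutator is $J$-linear---are \emph{vacuous}, because $[R(X,Y),A]$ commutes with $J$ automatically (both $R(X,Y)$ and $A$ do); the genuine constraint arises only from the stronger identity $R(JX,JY)=R(X,Y)$.

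Finally I would contract the vanishing $L^{-}$-part of $\Theta$ over an orthonormal frame, pairing the second curvature slot with the output of the endomorphism. Using $\sum_i g(Je_i,e_i)=0$ together with $\trace L^{-}=\trace(JL^{-})=0$, the contraction collapses to $(2m-2)\,L^{-}=0$. Since $2m\geq4$, this forces $L^{-}=0$, i.e.\ $\nabla\Lam$ commutes with $J$; combined with the reductions of the first paragraph, $\Lam$ is holomorphic and $J\Lam$ is Killing.
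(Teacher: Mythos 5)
Your proof is correct and complete. Note that for this lemma the paper offers no argument of its own---the ``proof'' is a pointer to the literature---so there is nothing internal to compare against; your write-up in fact supplies the argument the paper omits. The route is the standard one underlying the cited references: differentiate \eqref{eq:main}, antisymmetrize via the Ricci identity to get $[R(X,Y),A]=\Theta(X,Y)$, then extract $[\nabla\Lam,J]=0$. Your packaging is efficient, and the two load-bearing points are exactly right: the only genuine input is the K\"ahler symmetry $R(JX,JY)=R(X,Y)$ (you are correct that the trace of $\Theta$ vanishes identically because the Hessian $L$ is symmetric, and that $[\Theta(X,Y),J]=0$ holds identically even for the $J$-anticommuting part, so neither of the ``obvious'' moves yields a constraint), and the final frame contraction produces the factor $2m-2$, which is precisely where the hypothesis $2m\geq 4$ enters. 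I checked that contraction: summing $\Theta^-(X,e_i)e_i$ over an orthonormal frame, the eight terms contribute $2m,\ 1,\ 0,\ -1,\ -1,\ 0,\ -1,\ 0$ copies of $L^-X$ (the zeros coming from $\sum_i g(Je_i,e_i)=0$, $\trace L^-=0$ and $\trace(JL^-)=0$), giving $(2m-2)L^-X=0$ as you claim. A pleasant by-product: once $L^-=0$, your identity $[R(X,Y),A]=\Theta(X,Y)$ becomes exactly \eqref{eq:intcond}, whose proof in Lemma~\ref{lem:intcond} of the paper itself appeals to Lemma~\ref{lem:lambdakilling}; your argument therefore establishes both statements at once without that circularity.
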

\begin{proof} This is well known: see \cite[Eq. (13)]{DomMik1978},
\cite[Proposition 3]{ACG1} and \cite[Corollary 3]{FKMR}.
\end{proof}
As the introduction explains, our study builds on the following theorem.
\begin{thm}\cite{FKMR}\label{thm:extsystem}
Let $(M,g,J)$ be a connected K\"ahler manifold of real dimension $2m\geq 4$
and mobility $D(g,J)\geq 3$. Then there is a unique $B\in\R$ such that for
every $A\in\Sol(g,J)$, with corresponding vector field $\Lam$, there is a
function $\mu$ such that the system
\begin{equation}\label{eq:extsystem}\begin{split}
\nabla_X A
&=X^\flat \otimes \Lam+\Lam^\flat \otimes X
+JX^\flat\otimes J\Lam+J\Lam^\flat\otimes JX,\\
\nabla\Lam&=\mu\Id+BA,\\
\nabla\mu&=2B \Lam^\flat
\end{split}
\end{equation}
holds at every point of $M$.
\end{thm}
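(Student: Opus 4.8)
The plan is to treat \eqref{eq:main} as a linear PDE system of finite type and to prolong it, the three lines of \eqref{eq:extsystem} being the successive closure relations. The first line is \eqref{eq:main} itself, with $\Lam=\tfrac14\grad_g\trace A$ determined by $A$ via \eqref{eq:lambda}; moreover, by Lemma~\ref{lem:lambdakilling}, the endomorphism $\nabla\Lam$ is already known to be $(g,J)$-hermitian. The second and third lines should emerge by differentiating \eqref{eq:main} once and then twice more and imposing the integrability conditions coming from the curvature.

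First I would carry out the second prolongation. Differentiating \eqref{eq:main} and antisymmetrising, the Ricci identity $[\nabla_X,\nabla_Y]A=R(X,Y)\cdot A$ expresses a contraction of the curvature against $A$ in terms of $\nabla\Lam$; conversely, using the K\"ahler symmetries and first Bianchi identity of $R$ together with the hermiticity of $\nabla\Lam$ from Lemma~\ref{lem:lambdakilling}, one can solve for $\nabla\Lam$ in the form $\nabla\Lam=\mu\,\Id+\Psi(A)$, where $\mu$ is a scalar (the suitably normalised trace of $\nabla\Lam$) and $\Psi$ is a fixed, pointwise-linear, curvature-dependent operator sending hermitian endomorphisms to hermitian endomorphisms. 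This step is essentially algebra once the right contractions are chosen, and it produces the shape of the second line, but with the a priori non-scalar term $\Psi(A)$ in place of $BA$.

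The crux is to show that $\Psi(A)=BA$ for a single real constant $B$, the same for every solution and every point; this is exactly where the hypothesis $D(g,J)\geq3$ is used. With $\Id$ and at least two further independent solutions, I would evaluate the identity $\nabla\Lam=\mu\,\Id+\Psi(A)$ for several solutions simultaneously on the dense open set $M^{0}$, where the nonconstant eigenvalues are smooth and simple (Remark~\ref{rem:affine}). Because solutions may be combined to realise generic, pairwise distinct eigenvalues, the linear operator $\Psi$ is forced to act as a scalar $B(x)$ on the span of $\Sol(g,J)$; with only $\Id$ and one further solution the dimension count is insufficient to draw this conclusion, consistent with the existence of $\CC(B)$ metrics of mobility two. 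Finally, the third prolongation---differentiating $\nabla\Lam=\mu\,\Id+B\,A$, substituting \eqref{eq:main} and reusing the Ricci identity---yields $\nabla\mu=2B\,\Lam^\flat+(\text{terms in }\d B)$; integrability of the whole system then forces $\d B=0$, so $B$ is locally, and by connectedness globally, constant, and its value is pinned down as the common scalar by which $\Psi$ acts, which gives uniqueness.

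The main obstacle I anticipate is precisely this reduction of the curvature term $\Psi(A)$ to $BA$: the prolongation itself is routine finite-type bookkeeping, whereas extracting a single universal constant requires genuinely exploiting three independent solutions to annihilate the non-scalar part of $\Psi$ and then showing the surviving scalar is parallel. Some care is also needed along the singular locus $M\setminus M^{0}$, where eigenvalue multiplicities may jump; I would establish everything on $M^{0}$ and propagate the conclusions to all of $M$ by continuity and connectedness.
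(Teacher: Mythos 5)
First, a point of comparison: the paper does not prove Theorem~\ref{thm:extsystem} at all---it is quoted verbatim from \cite{FKMR}, where its proof occupies essentially all of Section~2 of that paper. So there is no in-paper argument to measure you against; the closest relative is Theorem~\ref{thm:equivalence}, which establishes the equivalence of the extended system with various nullity conditions but takes one of those conditions (not $D(g,J)\geq 3$) as its hypothesis. Your overall framework---prolong \eqref{eq:main}, read off the integrability condition of Lemma~\ref{lem:intcond}, and observe that $\nabla\Lam$ is determined modulo $\Id$ by $[R(\cdot,\cdot),A]$ via the injectivity in Lemma~\ref{lem:Kbracket}---is sound and matches the spirit of \cite{FKMR}.

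The gap is exactly where you place the ``crux'', and the sentence you offer there is not an argument. From $[R(X,Y),A]=-4[K(X,Y),\nabla\Lam]$ you correctly get $\nabla\Lam=\mu\,\Id+\Psi(A)$ with $\Psi$ a pointwise-linear, curvature-dependent map; but the claim that three independent solutions ``realise generic, pairwise distinct eigenvalues'' and thereby force $\Psi$ to act as a scalar does not follow. The map $A\mapsto\Psi(A)\ (\mathrm{mod}\ \Id)$ is a linear endomorphism-valued map on the $\geq 3$-dimensional space $\Sol(g,J)$; nothing about genericity of the eigenvalues of the individual $A$'s prevents $\Psi(A)$ from being a hermitian endomorphism entirely unrelated to $A$ (indeed $\Psi$ is built from $R$, not from $A$). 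The actual content of \cite[Theorem 5]{FKMR} is precisely to rule this out, and it requires playing the integrability identities of \emph{pairs} of independent non-proportional solutions against each other (in the language of this paper: combining \eqref{eq:second} for $A$ and $\tilde A$ to manufacture a nonzero vector in some $\Nu{B}$-nullity, after which Lemma~\ref{lem:Kbracket} gives $\nabla\Lam=\mu\,\Id+BA$). You have identified where the difficulty lies but supplied no mechanism for resolving it. The same applies to your final step: ``integrability forces $\d B=0$'' is only available once you already know $\Nu{B}(X,J\Lam)=0$, and even then \eqref{eq:third} only yields $\tau(X)\Id+\d B(X)A=0$, which kills $\d B$ only where $A$ is not proportional to $\Id$ and, in the nullity-driven argument of Theorem~\ref{thm:equivalence}, still requires the case analysis of Lemma~\ref{low-rank}. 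The constancy of $B$ is delicate enough that the authors thank Eastwood and Neusser for catching a gap in it; it cannot be dispatched by appeal to ``integrability of the whole system''.
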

\begin{rem} If for $A\in \Sol(g,J)$, $A\neq \mathrm{const}\cdot \Id$, with
corresponding vector field $\Lam$, there exists a function $\mu$ such that
$(A,\Lam,\mu)$ solves \eqref{eq:extsystem} for a certain constant $B$, then
this holds for any other element $\tilde A\in \Sol(g,J)$. This is clear if
$\tilde A$ is a linear combination of $\Id$ and $A$ and follows from Theorem
\ref{thm:extsystem} if $\Id,A,\tilde A$ are linearly independent.
\end{rem}
\begin{defn}\label{defn:CCB} Let $B$ be a real number. A K\"ahler metric
$(g,J)$ is called\footnote{Here ``$\CC$''
    suggests constant/curvature/cone and complex/c-projective, and replaces
    the term ``$K_n(B)$'', often used in the classical c-projective
    literature, in which $K_n$ denotes a K\"ahler $n$-manifold.
} $\CC(B)$ if it admits a solution $(A,\Lam,\mu)$ to the
system~\eqref{eq:extsystem} with $\Lam$ not identically zero.
\end{defn}
\begin{rem}
  In Definition~\ref{defn:CCB} we require $B$ to be a constant. If $B$ is
  initially assumed to be a function, it turns out that this function must be
  (locally) constant provided there exists at almost every point a nonzero
  vector contained in the $B$-nullity of the curvature, see Definition
  \ref{defn:nullity} and Theorem \ref{thm:equivalence} below.
\end{rem}
\begin{rem} Neither equation~\eqref{eq:ham} nor equation~\eqref{eq:main}
provide the most natural formulation of c-projective equivalence and mobility
because they treat the metrics $g$ and $\gt$ asymmetrically. This can be
remedied by observing that the defining equation~\eqref{eq:cproj} for
c-projective equivalence is really an equivalence relation between complex
affine connections (connections $\nabla$ on $TM$ with $\nabla J=0$).  A
\emph{c-projective structure} on a complex manifold $(M,J)$ is a c-projective
equivalence class of such complex affine connections. Equation~\eqref{eq:main}
can be rewritten without reference to a background metric $g$ replacing $A$
with the metric $h$ on $T^*M$ defined by $h(\alpha,\beta) =g(\alpha\circ
A,\beta)$. Then equation~\eqref{eq:main} becomes
\begin{equation*}
\nabla_X h=X \otimes\Lam+\Lam\otimes X+JX\otimes J\Lam+J\Lam\otimes JX
\end{equation*}
(for all vector fields $X$) and this equation for $h$ depends only on the
c-projective class of $\nabla$ provided that $h$ is viewed as a section of
$\cL^*\otimes S^2TM$, where $\cL^{\otimes(m+1)}=\Wedge^{2m}TM$.

This viewpoint is developed in detail in the forthcoming survey~\cite{CEMN} on
c-projective geometry---see also~\cite{Yoshi78}. For the present article, we
shall always have in mind a background metric, and so we do not pursue this
reformulation any further.
\end{rem}

\subsection{The classification of hamiltonian $2$-forms}
\label{subsec:localclass}

According to \cite{ACG1}, a K\"ahler metric $(g,J,\omega)$ admitting a
hamiltonian $2$-form---or equivalently an $A\in\Sol(g,J)$---is locally a
bundle over a product of K\"ahler $2m_\eta$-manifolds indexed by the constant
eigenvalues $\eta$ of $A$ ($m_\eta$ being the multiplicity of $\eta$), whose
``orthotoric'' fibres are totally geodesic with the nonconstant eigenvalues
$\xi_1,\ldots\xi_\ell$ of $A$ as coordinates. On a dense open set, we may write
\begin{align}\label{eq:locclass}
g&=\underbrace{\sum_{\eta} p_{\mathrm{nc}}(\eta)g_{\eta}}_{\mbox{\small base metric}}
+\underbrace{\sum_{i=1}^\ell\frac{\Delta_j}{\Theta_j(\xi_j)}\d\xi_j^2
+\sum_{j=1}^\ell \frac{\Theta_j(\xi_j)}{\Delta_j}\Bigl(\sum_{r=1}^\ell
\sigma_{r-1}(\hat\xi_j)\theta_r\Bigr)^2}_{\mbox{\small fibre metric}},\\
\omega&=\sum_\eta p_{\mathrm{nc}}(\eta) \omega_{\eta}
+\sum_{r=1}^\ell \d\sigma_r\wedge \theta_r,\qquad\text{with}\qquad
\d\theta_r=\sum_\eta(-1)^r\eta^{\ell-r}\omega_{\eta},
\end{align}
where $p_{\mathrm{nc}}(t)=\prod_{i=1}^\ell(t-\xi_i)$, $\sigma_r$ is the $r$th
elementary symmetric function of $\{\xi_1,\ldots\xi_\ell\}$, $\sigma_{r-1}
(\hat{\xi} _j)$ is the $(r-1)$st such function of $\{\xi_k:k\neq j\}$,
$\Delta_j=\prod_{k\neq j}(\xi_j-\xi_k)$, and
\begin{equation}
J \d \xi _j = \frac{\Theta_j (\xi_j)}{\Delta_j} \,
\sum_{r=1}^\ell \sigma_{r-1} (\hat{\xi}_j) \,\theta_r,\qquad
J\theta_r = (-1)^r \,\sum_{j=1}^\ell \frac{\Delta_j}{\Theta_j(\xi_j)}
\xi_j^{\ell-r}\,\d\xi_j.
\end{equation}
For any metric of this form,
\begin{align*}
\phi:&= \sum_\eta \eta\, p_{\mathrm{nc}}(\eta) \omega_\eta
+ \sum_{j=1}^\ell \xi_j\,
\d\xi_j\wedge\Bigl(\sum_{r=1}^\ell\sigma_{r-1}(\hat\xi_j)\theta_r\Bigr)\\
&= \sum_\eta \eta\, p_{\mathrm{nc}}(\eta) \omega_\eta
+\sum_{r=1}^\ell (\sigma_r \d\sigma_1 -\d\sigma_{r+1})\wedge\theta_r
\end{align*}
is a hamiltonian $2$-form. The extension of this local classification to 
pseudo-riemannian metrics is subject of the forthcoming paper~\cite{BMR}.

Curvature properties of the metric $g$ in~\eqref{eq:locclass} are also
computed in~\cite{ACG1}, to which we refer for details and explanations.  Let
$p_c(t)=\prod_\eta (t-\eta)^{m_\eta}$ be the (monic) polynomial whose roots
are the constant eigenvalues $\eta$ of $\phi$, counted with multiplicity.
\begin{enumerate}
\item $g$ is Bochner-flat if and only if the functions $\Theta_j(t)$ are
  equal, given by a polynomial $\Theta(t)$ of degree $\leq \ell+2$, with
  $\Theta(\eta)=0$ for all constant eigenvalues $\eta$, and the base metrics
  $g_\eta$ have constant holomorphic sectional curvature (CHSC), given by
  $-\Theta'(\eta)$. The metric $g$ is itself CHSC if and only if in addition
  $\deg\Theta(t)\leq \ell+1$.
\item $g$ is weakly Bochner-flat if and only if the functions
  $(p_c\Theta_j)'(t)/p_c(t)$ are equal, given by a polynomial $\Psi(t)$ of
  degree $\leq\ell+1$, and the base metrics $g_\eta$ are K\"ahler--Einstein,
  with $\frac1{m_\eta}\Scal_{g_\eta}=-\Psi(\eta)$. The metric $g$ is itself
  K\"ahler--Einstein if and only if in addition $\deg\Psi(t)\leq \ell$.
\end{enumerate}

In particular (applying (1) fibrewise, using the case that there are no
constant eigenvalues), the orthotoric fibres have CHSC if and only if the
functions $\Theta_j(t)$ are equal to a common polynomial of degree $\leq
\ell+1$.

It will also be useful to recall from~\cite{ACG1} that there is a
``Gray--O'Neill'' formula~\cite{Gray,ONeill} for the Levi-Civita connection
of $g$ in terms of the fibre and base metrics, where the Gray--O'Neill tensor
of the horizontal distribution is given by
\begin{equation}\label{eq:GrayONeill}
2C(X,Y)= \sum_{r=1}^\ell \bigl(\Omega_r(X,Y)J\Lam_r - \Omega_r(JX,Y)\Lam_r\bigr)
\end{equation}
for $\Omega_r=\sum_\eta(-1)^r\eta^{\ell-r}\omega_{\eta}$ and
$\Lam_r=\grad_g\sigma_r$.

\section{Curvature nullity and the extended system}
\label{sec:curvaturenullity}

Let $R\in \Omega^{2}(M,\mathfrak{gl}(TM))$ denote the curvature of the
K\"ahler manifold $(M,g,J)$,
\begin{equation*}
R(X,Y)Z=(\nabla_X \nabla_Y - \nabla_Y \nabla_X-\nabla_{[X,Y]})Z,
\end{equation*}
and let 
\begin{align}\label{eq:constholom}
K(X,Y)=\tfrac{1}{4}(Y^\flat \otimes X-X^\flat\otimes Y
+JY^\flat\otimes JX-JX^\flat\otimes JY+2g(X,JY)J)
\end{align}
be the algebraic curvature tensor of constant holomorphic sectional curvature.

\begin{lem}\label{lem:intcond}
Let $(M,g,J)$ be a K\"ahler manifold of real dimension $2m\geq 4$. Then every
$A\in\Sol(g,J)$ satisfies the identity
\begin{align}\label{eq:intcond}
[R(X,Y),A]=-4 [K(X,Y),\nabla\Lam]
\end{align}
at every point for all tangent vectors $X,Y$.
\end{lem}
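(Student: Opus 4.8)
The plan is to derive the identity \eqref{eq:intcond} as an integrability condition obtained by differentiating the main equation \eqref{eq:main} a second time and antisymmetrizing. The starting point is that \eqref{eq:main} expresses $\nabla A$ as a universal algebraic expression in $\Lam$; schematically $\nabla_X A = L(X,\Lam)$, where $L$ is linear in $X$ and $\Lam$ and built from $X^\flat, \Lam^\flat, J$ in a way that is parallel (since $\nabla g=0$ and $\nabla J=0$ on a K\"ahler manifold). First I would apply $\nabla_Y$ to both sides, using the Leibniz rule on the right-hand side to produce a term $L(X,\nabla_Y\Lam)$ (the term $L(\nabla_Y X,\Lam)$ will cancel once we antisymmetrize and subtract the $\nabla_{[X,Y]}$ piece, so I can work with covariant-derivative operators rather than tracking $\nabla_Y X$ explicitly).

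Next I would form the commutator of second covariant derivatives, $(\nabla_X\nabla_Y-\nabla_Y\nabla_X-\nabla_{[X,Y]})A = R(X,Y)\cdot A$, where the action of the curvature on the endomorphism $A$ is the commutator $[R(X,Y),A]$ (since $A$ is an endomorphism, curvature acts as a derivation). On the right-hand side this same antisymmetrization turns $L(X,\nabla_Y\Lam)-L(Y,\nabla_X\Lam)$ into an expression that is linear in the single endomorphism $\nabla\Lam$. The computation I anticipate is that antisymmetrizing $L(X,\nabla_Y\Lam)$ over $X,Y$ reproduces, up to the factor $-4$, exactly the action of the constant-holomorphic-sectional-curvature tensor $K(X,Y)$ from \eqref{eq:constholom} on $\nabla\Lam$, i.e. the commutator $[K(X,Y),\nabla\Lam]$. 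This is plausible on structural grounds: $K(X,Y)$ is precisely the algebraic curvature type whose ``derivative'' reproduces the symmetric-product structure on the right of \eqref{eq:main}, so the Ricci-type identity for a $\CC$-type deformation should land in the $K$-tensor.

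The main obstacle is the bookkeeping in this antisymmetrization: one must expand $L(X,\nabla_Y\Lam)$, namely
\[
X^\flat\otimes\nabla_Y\Lam+(\nabla_Y\Lam)^\flat\otimes X
+JX^\flat\otimes J\nabla_Y\Lam+J(\nabla_Y\Lam)^\flat\otimes JX,
\]
antisymmetrize in $X,Y$, and recognize the result as a commutator with $\nabla\Lam$. Here it is essential to use that $\nabla\Lam$ is $(g,J)$-hermitian (Lemma~\ref{lem:lambdakilling}): this symmetry is what allows the $J$-terms to combine correctly and produces the $2g(X,JY)J$ contribution implicit in $K$. I would verify the matching by testing the identity on the two canonical algebraic building blocks, $X^\flat\otimes Z$ and $(JX)^\flat\otimes JZ$, checking that both sides agree as endomorphism-valued $2$-forms; once these match, linearity and the hermitian symmetry of $\nabla\Lam$ finish the identification. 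The normalization factor $-4$ should then emerge from comparing the $\tfrac14$ in the definition \eqref{eq:constholom} of $K$ with the coefficients in \eqref{eq:main}.
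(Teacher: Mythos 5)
Your proposal is correct and follows exactly the paper's proof: apply the Ricci identity $[R(X,Y),A]=\nabla_X(\nabla A)_Y-\nabla_Y(\nabla A)_X$ to the main equation \eqref{eq:main}, antisymmetrize, and use that $\nabla\Lam$ is $(g,J)$-hermitian (Lemma~\ref{lem:lambdakilling}) to recognize the result as $-4[K(X,Y),\nabla\Lam]$. The bookkeeping you anticipate does go through as described (in particular the $g(X,JY)J$ term of $K$ contributes nothing since $\nabla\Lam$ commutes with $J$).
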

\begin{proof}
Equation \eqref{eq:intcond} is well known in the theory of c-projectively
equivalent metrics, see for example \cite{DomMik1978,Mikes}. To prove it,
consider the identity
\begin{align}\label{eq:intcondendo}
[R(X,Y),A]=\nabla_X(\nabla A)_Y-\nabla_Y(\nabla A)_X
\end{align}
which holds for any endomorphism $A\in\Gamma(\mathfrak{gl}(TM))$. Assuming
that $A\in \Sol(g,J)$, we can replace the covariant derivatives of $A$
in \eqref{eq:intcondendo} with \eqref{eq:main}, to derive an integrability
condition for \eqref{eq:main}. A straightforward calculation yields the
desired equation \eqref{eq:intcond}. We note that we have to use that
$\nabla\Lam$ commutes with $J$, see Lemma \ref{lem:lambdakilling}.
\end{proof}

\begin{defn}\label{defn:nullity}
For $p\in M$ and $B\in\R$, the $B$-nullity space of the curvature $R$ at $p$
is the linear space
\begin{equation}
N(B)_p=\{Z\in T_p M:\Nu{B}(X,Y)Z=0\,\,\forall X,Y\in T_p M\},
\end{equation}
where $\Nu{B}(X,Y)=R(X,Y)+4BK(X,Y)$.
\end{defn}
\begin{rem}\label{rem:nullity}
Since $g(\Nu{B}(\cdot,\cdot)\cdot,\cdot)$ is a section of $S^2(\Wedge^2T^*M)$,
$N(B)_p$ is the set of $Z\in T_pM$ whose contraction into any entry of
$g(\Nu{B}(\cdot,\cdot)\cdot,\cdot)$ is zero. Note also that $N(B)_p$ is
$J$-invariant, i.e., a complex linear subspace of $T_p M$.
\end{rem}
\begin{rem}
The real number $B$ in the definition of the nullity is unique: 
if $Z\in N(B)_p$ and $Z'\in N(B')_p$ are nonzero vectors, then $B=B'$.
To see this, we replace $X$ by $Z'$ in the nullity condition for $Z$, and
apply the nullity condition for $Z'$ to obtain $(B-B')K(Z,Z')=0$. Hence,
$B=B'$ or $K(Z,Z')=0$. The last equation implies $Z'$ is a multiple of $Z$.
Thus $(B-B')K(X,Y)Z=0$ for all vectors $X,Y$, which, for $Z$ nonzero, forces
$B=B'$.

However, $B$ may depend on the point $p$, and (of course) the metric $g$.
\end{rem}

\begin{prop}\label{prop:key-identities} Let $(M,g,J)$ be a K\"ahler manifold
of real dimension $2m\geq 4$, and let $A\in\Sol(g,J)$ with corresponding
vector field $\Lam$. Then for any functions $B,\mu$, we have
\begin{equation}\label{eq:second}
[K(X,Y),\nabla\Lam-BA-\mu\Id]+\tfrac1 4 [\Nu{B}(X,Y),A]=0
\end{equation}
and, if $B$ and $\mu$ are smooth,
\begin{equation}\label{eq:third}
\nabla_X(\nabla\Lam-BA-\mu\Id)+J\Nu{B}(X,J\Lam)
+(\nabla_X\mu-2Bg(\Lam,X))\Id+\d B(X)A=0.
\end{equation}
\end{prop}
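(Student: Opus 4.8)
The plan is to treat both identities as pointwise consequences of the main equation~\eqref{eq:main}, the integrability condition of Lemma~\ref{lem:intcond}, and the fact (Lemma~\ref{lem:lambdakilling}) that $J\Lam$ is Killing; no new global input is needed. I would dispose of~\eqref{eq:second} first, since it is purely algebraic. Substituting $\Nu{B}(X,Y)=R(X,Y)+4BK(X,Y)$ and using that $\mu\Id$ is central (so that $[K(X,Y),\mu\Id]=0$), the left-hand side of~\eqref{eq:second} becomes $[K(X,Y),\nabla\Lam]-B[K(X,Y),A]+\tfrac14[R(X,Y),A]+B[K(X,Y),A]$. The two $B[K(X,Y),A]$ terms cancel, leaving $\tfrac14[R(X,Y),A]+[K(X,Y),\nabla\Lam]$, which vanishes by~\eqref{eq:intcond}. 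Thus~\eqref{eq:second} is merely a rewriting of Lemma~\ref{lem:intcond} in which $B$ and $\mu$ enter only through terms that cancel.

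For~\eqref{eq:third} I would begin with the Leibniz rule applied to $\nabla_X(\nabla\Lam-BA-\mu\Id)$, which produces $\nabla_X(\nabla\Lam)-\d B(X)\,A-B\,\nabla_X A-(\nabla_X\mu)\,\Id$. The terms $\d B(X)\,A$ and $(\nabla_X\mu)\,\Id$ are cancelled exactly by the explicit $\d B(X)\,A$ and $\nabla_X\mu\,\Id$ summands in~\eqref{eq:third}, so that the identity reduces to showing $\nabla_X(\nabla\Lam)-B\,\nabla_X A+J\Nu{B}(X,J\Lam)-2Bg(\Lam,X)\Id=0$. The essential differential input now enters: since $J\Lam$ is Killing and $J$ is parallel, the standard second-order Killing identity $\nabla^2_{X,Y}(J\Lam)=R(X,J\Lam)Y$ gives, after applying $J^{-1}=-J$, the endomorphism identity $\nabla_X(\nabla\Lam)=-JR(X,J\Lam)$. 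Writing $J\Nu{B}(X,J\Lam)=JR(X,J\Lam)+4B\,JK(X,J\Lam)$, the curvature contributions $\nabla_X(\nabla\Lam)$ and $JR(X,J\Lam)$ cancel, and what remains is the $B$-multiple $B\bigl(4JK(X,J\Lam)-\nabla_X A-2g(\Lam,X)\Id\bigr)$.

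It then suffices to verify the purely algebraic identity $4JK(X,J\Lam)=\nabla_X A+2g(\Lam,X)\Id$. I would compute $K(X,J\Lam)$ directly from~\eqref{eq:constholom}, using $J(J\Lam)=-\Lam$, $(J\Lam)^\flat=J\Lam^\flat$ and $g(X,J(J\Lam))=-g(X,\Lam)$, then compose with $J$ on the left (so that $J\circ(\alpha\otimes v)=\alpha\otimes Jv$ and $J\circ J=-\Id$); comparing the outcome with the right-hand side of~\eqref{eq:main} yields exactly $\nabla_X A+2g(\Lam,X)\Id$, so the remaining $B$-term vanishes and~\eqref{eq:third} follows. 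The main obstacle is the middle step, namely correctly invoking the Killing identity for $J\Lam$ and transferring it through $J$ to obtain $\nabla_X(\nabla\Lam)=-JR(X,J\Lam)$; this is the only genuinely differential ingredient, and its sign must be tracked carefully against the curvature convention $R(X,Y)Z=(\nabla_X\nabla_Y-\nabla_Y\nabla_X-\nabla_{[X,Y]})Z$. Everything else is bookkeeping with the explicit tensors in~\eqref{eq:main} and~\eqref{eq:constholom}, where the only subtlety is keeping the $J$-signs straight via $J^2=-\Id$.
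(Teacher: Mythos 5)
Your proof is correct and follows essentially the same route as the paper: equation~\eqref{eq:second} is obtained by expanding $\Nu{B}$ and cancelling the $B[K(X,Y),A]$ terms so that it reduces to Lemma~\ref{lem:intcond}, and equation~\eqref{eq:third} is obtained from the Kostant identity $\nabla_X\nabla(J\Lam)=R(X,J\Lam)$ for the Killing field $J\Lam$, giving $\nabla_X\nabla\Lam=-JR(X,J\Lam)$, together with the substitution of $\nabla_XA$ from~\eqref{eq:main}. The algebraic identity $4JK(X,J\Lam)=\nabla_XA+2g(\Lam,X)\Id$ that you isolate is exactly the content of the paper's final substitution step, and your sign bookkeeping checks out.
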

\begin{proof} Equation~\eqref{eq:second} is immediate from
Lemma~\ref{lem:intcond}\eqref{eq:intcond}. Recall from
Lemma~\ref{lem:lambdakilling} that $J\Lam$ is a Killing vector field, and
hence $\nabla_X\nabla\Lam=-J\nabla_X\nabla J\Lam=-JR(X,J\Lam)$ (by
the standard formula $\nabla_X\nabla K=R(X,K)$, $X\in TM$, which holds for any
Killing vector field $K$, see \cite{Kostant}). Equation~\eqref{eq:third}
follows from this by expanding $\nabla_X(\nabla\Lam-BA-\mu\Id)$ and
substituting for $\nabla_X A$ from equation~\eqref{eq:main}.
\end{proof}
\begin{lem}\label{lem:Kbracket} Let $Q$ be a hermitian endomorphism and $Z$
a nonzero tangent vector at $p\in M$ such that $[K(X,Z),Q]=0$ for all $X\in
T_p M$. Then $Q$ is a multiple of the identity.
\end{lem}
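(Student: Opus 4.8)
The plan is to argue by contradiction, exploiting that a hermitian (i.e.\ $g$-symmetric and $J$-commuting) endomorphism $Q$ of the Riemannian space $T_pM$ is diagonalizable with real eigenvalues and with mutually $g$-orthogonal, $J$-invariant eigenspaces. Suppose $Q$ is not a multiple of $\Id$, so that it has at least two distinct eigenvalues; I will derive $Z=0$, contradicting the hypothesis that $Z\neq0$.

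The first step is to observe that the hypothesis forces each eigenspace to be invariant under all the operators $K(X,Z)$. Indeed, writing $E_\lambda$ for the $\lambda$-eigenspace and taking $W\in E_\lambda$, the assumption $[K(X,Z),Q]=0$ gives $0=[K(X,Z),Q]W=(\lambda\Id-Q)K(X,Z)W$, so that $K(X,Z)W\in E_\lambda$ for every $X\in T_pM$. Each eigenspace of $Q$ is thus preserved by the whole family $\{K(\,\cdot\,,Z)\}$.

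Next I would fix a second eigenvalue $\mu\neq\lambda$ (which exists by the contradiction hypothesis), let $P_\mu$ denote the $g$-orthogonal projection onto $E_\mu$, and project the relation $K(X,Z)W\in E_\lambda$ onto $E_\mu$ using the explicit formula \eqref{eq:constholom}. Since $P_\mu$ commutes with $J$ and annihilates $W\in E_\lambda$, the term proportional to $JW$ drops out, and writing $Z_\lambda=P_\lambda Z$ the condition reduces to $g(Z,W)\,P_\mu X-g(X,W)\,Z_\mu+g(JZ,W)\,JP_\mu X-g(JX,W)\,JZ_\mu=0$. Specializing to $X\in E_\mu$ kills the coefficients $g(X,W)$ and $g(JX,W)$ by orthogonality, and uses $g(Z,W)=g(Z_\lambda,W)$, $g(JZ,W)=g(JZ_\lambda,W)$ (as $E_\lambda$ is $J$-invariant), leaving
\[
g(Z_\lambda,W)\,X+g(JZ_\lambda,W)\,JX=0\qquad\text{for all }X\in E_\mu.
\]
Because $X$ and $JX$ are $\R$-linearly independent whenever $X\neq0$, this yields $g(Z_\lambda,W)=0$ for every $W\in E_\lambda$; taking $W=Z_\lambda$ and invoking positive-definiteness of $g$ gives $Z_\lambda=0$. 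As $\lambda$ ranges over the (at least two) eigenvalues, every component $Z_\lambda$ vanishes, so $Z=\sum_\lambda Z_\lambda=0$, the required contradiction. Hence $Q$ has a single eigenvalue and is a multiple of $\Id$.

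The main obstacle is the middle step: extracting a usable constraint from the invariance $K(X,Z)W\in E_\lambda$. The tensor $K$ mixes an $X$-direction part, a part in $\mathrm{span}\{Z,JZ\}$, and a term along $JW$; the decisive idea is to project onto a \emph{complementary} eigenspace $E_\mu$---which eliminates the $JW$ term and decouples the $Z$-contributions---and then to choose $X\in E_\mu$ so that the remaining $g(X,W)$ and $g(JX,W)$ coefficients vanish, leaving a clean relation that constrains only the $E_\lambda$-component of $Z$. Once this mechanism is in place, the rest is immediate.
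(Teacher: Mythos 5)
Your proof is correct, but it follows a genuinely different route from the paper's. You argue by contradiction via the spectral decomposition of $Q$: the hypothesis makes each ($J$-invariant, mutually orthogonal) eigenspace $E_\lambda$ invariant under every $K(X,Z)$, and projecting the explicit formula \eqref{eq:constholom} onto a second eigenspace $E_\mu$ and specializing $X\in E_\mu$ cleanly isolates the relation $g(Z_\lambda,W)X+g(JZ_\lambda,W)JX=0$, whence $Z_\lambda=0$ for every $\lambda$ and so $Z=0$. The paper instead normalizes $Q$ to be tracefree and argues directly: contracting the commutator identity over an orthonormal frame yields $2m\,QZ=0$, and substituting $QZ=0$ back into the identity shows $Q$ vanishes on $\mathrm{span}\{Z,JZ\}^{\perp}$ as well, hence $Q=0$. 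Both arguments are elementary linear algebra of comparable length; the paper's trace computation avoids invoking diagonalizability and never needs to discuss the eigenvalue structure of $Q$, while yours makes the geometric mechanism more transparent (the nullity vector $Z$ cannot have a nonzero component in any eigenspace once there are at least two of them). Note that both proofs use positive-definiteness of $g$ in an essential way---yours when concluding $Z_\lambda=0$ from $g(Z_\lambda,Z_\lambda)=0$, the paper's in the orthonormal-frame trace---so neither extends verbatim to indefinite signature, where a $g$-symmetric endomorphism need not even be diagonalizable. Your argument is a valid alternative and needs no repair.
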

\begin{proof} We may assume $Q$ is tracefree and prove it vanishes. By
definition~\eqref{eq:constholom} of $K$,
\begin{equation}\label{eq:bracket}
[Z^\flat\otimes X-X^\flat \otimes Z+JZ^\flat\otimes JX-JX^\flat\otimes JZ,Q]=0.
\end{equation}
Let $e_1,\ldots e_{2m}$ be an orthonormal frame of $T_p M$. We take a trace
by applying~\eqref{eq:bracket} to $e_i$ with $X=e_i$ and summing over $i$.
Since $Q$ and $Q\circ J=J\circ Q$ are trace-free, and $Q$ is hermitian, we
obtain (with summation understood)
\begin{equation*}
0=g(Z,Qe_i)e_i-g(Z,e_i)Qe_i+g(e_i,e_i)QZ+g(JZ,Qe_i)Je_i-g(JZ,e_i)QJe_i =2m QZ.
\end{equation*}
Thus $QZ=0$, which we substitute into~\eqref{eq:bracket} to obtain:
\begin{equation*}
Z^\flat\otimes QX + (QX)^\flat\otimes Z + JZ^\flat\otimes Q(JX) +
Q(JX)^\flat\otimes JZ=0.
\end{equation*}
For any $Y\in\mathrm{span}\{Z,JZ\}^{\perp}$ this yields (using that $Q$ is
hermitian)
\begin{equation*}
0=g(QX,Y)Z+g(Q(JX),Y)JZ=g(X,QY)Z+g(JX,QY)JZ.
\end{equation*}
Since $Z\neq 0$, $Q$ vanishes on $\mathrm{span}\{Z,JZ\}^{\perp}$. But $Q$
vanishes on $\mathrm{span}\{Z,JZ\}$, so $Q=0$.
\end{proof}
\begin{thm}\label{thm:equivalence}
Let $(M,g,J)$ be a connected K\"ahler manifold of real dimension $2m\geq 4$.
Then for any $A\in\Sol(g,J)$ with corresponding vector field $\Lam$ such that
$A$ is not parallel \textup(equivalently, $\Lam\neq 0$\textup) the following
statements are equivalent\textup:
\begin{enumerate}
\item there is a constant $B$ such that $[\Nu{B}(X,Y),A]=0$ for all vector
  fields $X,Y$\textup;
\item there is a constant $B$ and a smooth function $\mu$ such that
  $\nabla\Lam=BA+\mu\Id$\textup;
\item there is a constant $B$ and a smooth function $\mu$ such that $A$
  satisfies the extended system~\eqref{eq:extsystem}\textup;
\item there is a constant $B$ such that $\Lam$ is in the $B$-nullity space
  $N(B)_p$ at every $p\in M$---equivalently $\Nu{B}(X,J\Lam)=0$ for all $X\in
  TM$\textup;
\item at every point $p$ of a dense subset, there is a real number $B=B(p)$
  such that the $B$-nullity space $N(B)_p$ is nonzero\textup;
\item there is a constant $B$ such that for any open subset $U$ of $M$ and any
  eigenvalue $\xi$ of $A$ smoothly defined on $U$, $\grad_g\xi$ is in the
  $B$-nullity of the curvature on $U$.
\end{enumerate}

If for given $B$, these conditions hold for some non-parallel $A\in\Sol(g,J)$,
then they hold for all $A\in\Sol(g,J)$ \textup(with the same constant
$B$\textup). In particular, the metric $g$ is $\CC(B)$.
\end{thm}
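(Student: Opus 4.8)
The strategy is to establish a cycle of implications among the six conditions, using Proposition~\ref{prop:key-identities} and Lemma~\ref{lem:Kbracket} as the principal tools, and then to bootstrap from a single non-parallel $A$ to all of $\Sol(g,J)$. The plan is to prove $(2)\Rightarrow(3)\Rightarrow(4)\Rightarrow(5)$, then $(5)\Rightarrow(1)$, and finally close the loop with $(1)\Rightarrow(2)$; condition $(6)$ will be tied in separately via Remark~\ref{rem:affine}, since $\grad_g\xi$ lies in the eigenspace of $A$ for each nonconstant eigenvalue $\xi$ and $\Lam$ is a sum of such gradients.

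First I would treat $(2)\Rightarrow(3)$: assuming $\nabla\Lam=BA+\mu\Id$ with $B$ constant, I substitute into identity~\eqref{eq:third} of Proposition~\ref{prop:key-identities}. Since $\d B=0$ and the first term vanishes, what remains is $J\Nu{B}(X,J\Lam)+(\nabla_X\mu-2Bg(\Lam,X))\Id=0$; applying $J$ and comparing the trace-free and scalar parts forces both $\Nu{B}(X,J\Lam)=0$ and $\nabla\mu=2B\Lam^\flat$, which is precisely the third equation of~\eqref{eq:extsystem}. This simultaneously yields $(4)$ (via the equivalence $\Lam\in N(B)_p\Leftrightarrow\Nu{B}(X,J\Lam)=0$, using $J$-invariance of $N(B)_p$ noted in Remark~\ref{rem:nullity}). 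The implication $(4)\Rightarrow(5)$ is then immediate since $\Lam\neq0$ exhibits a nonzero element of $N(B)_p$ at every point, hence on a dense subset.

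The crux of the argument is $(5)\Rightarrow(1)$, and this is where I expect the main obstacle. Condition~$(5)$ only provides, at each point $p$ of a dense set, \emph{some} nonzero nullity vector $Z_p$ for \emph{some} pointwise-defined $B(p)$; I must upgrade this to the \emph{global} bracket identity $[\Nu{B}(X,Y),A]=0$ for a \emph{single constant} $B$. The key is to combine the nullity of $Z_p$ with equation~\eqref{eq:second} of Proposition~\ref{prop:key-identities}: with $\mu$ chosen so that $\nabla\Lam-BA-\mu\Id$ is trace-free, equation~\eqref{eq:second} gives $[K(X,Y),\nabla\Lam-BA-\mu\Id]=-\tfrac14[\Nu{B}(X,Y),A]$, and feeding a nullity vector into the $K$-bracket lets me apply Lemma~\ref{lem:Kbracket} to conclude that $Q:=\nabla\Lam-BA-\mu\Id$ is a multiple of the identity, hence zero. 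Establishing that $B(p)$ is locally constant (so that the resulting $B$ is a genuine constant on the connected $M$) is the delicate point: I would argue that $\nabla\Lam-BA$ being a multiple of $\Id$ at a point, combined with $A$ non-parallel and $\Lam\neq0$ on a dense set, pins down $B$ uniquely and continuously, forcing it to be constant by connectedness. This done, $(2)$ and hence $(1)$ follow.

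It remains to close with $(1)\Rightarrow(2)$ and to propagate the conclusion to all of $\Sol(g,J)$. For $(1)\Rightarrow(2)$ I again invoke equation~\eqref{eq:second}: condition~$(1)$ kills the $\Nu{B}$-bracket term, leaving $[K(X,Y),\nabla\Lam-BA-\mu\Id]=0$ for suitable $\mu$; choosing $\mu$ to make the endomorphism trace-free and applying Lemma~\ref{lem:Kbracket} (with $Z=\Lam$, which is nonzero on a dense set) yields $\nabla\Lam-BA-\mu\Id=0$, i.e.\ $(2)$. Finally, once these equivalent conditions hold for one non-parallel $A$ with constant $B$, condition~$(1)$ reads $[\Nu{B}(X,Y),A]=0$; but the bracket identity~\eqref{eq:intcond} of Lemma~\ref{lem:intcond} relates $[R(X,Y),A']$ to $\nabla\Lam'$ for \emph{any} $A'\in\Sol(g,J)$, and the uniqueness of $B$ established in the Remark after Definition~\ref{defn:nullity} guarantees the same constant $B$ works throughout. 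Thus all $A'\in\Sol(g,J)$ satisfy the extended system~\eqref{eq:extsystem}, so $g$ is $\CC(B)$ by Definition~\ref{defn:CCB}.
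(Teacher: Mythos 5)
Your skeleton of implications is essentially the paper's, and you have correctly located the crux at $(5)\Rightarrow(1)$, but your treatment of that crux contains a genuine gap. Having obtained, at each point of a dense set, a nullity vector $Z$ and scalars $B(p),\mu(p)$ with $\nabla\Lam-BA=\mu\,\Id$ (this part is fine: it is exactly the paper's use of equation~\eqref{eq:second} with $Y=Z$ plus Lemma~\ref{lem:Kbracket}), you assert that $B$ is ``pinned down uniquely and continuously, forcing it to be constant by connectedness.'' Continuity on a connected set does not imply constancy, and uniqueness of $B(p)$ at each point is not in question --- what must be shown is $\d B=0$. The paper's argument here is substantial: one inserts $\nabla\Lam=\mu\Id+BA$ into~\eqref{eq:third} to get $J\Nu{B}(X,J\Lam)+\tau(X)\Id+\d B(X)A=0$, applies this to $Z$ to conclude either $\d B=0$ outright (if $Z$ is not an eigenvector of $A$) or that $A-\xi\Id$ has complex rank $\le 1$; the latter case requires Lemma~\ref{low-rank}, whose proof uses the explicit local classification~\eqref{eq:locclass} and a computation (equations~\eqref{eq:Bexplicit}--\eqref{eq:derivB}) forcing $F_1=F_2$ to be a polynomial of degree $\le 3$. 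None of this is replaceable by a continuity argument; indeed the acknowledgements record that the constancy of $B$ was precisely the gap in an earlier version of this proof. After that, one still needs the patching argument (via \cite[\S 2.5]{FKMR} and the trace of the second equation of~\eqref{eq:extsystem}) to globalize $B$ and $\mu$ from neighbourhoods to all of $M$.

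A secondary flaw: in $(2)\Rightarrow(3)$ you propose to split $J\Nu{B}(X,J\Lam)+(\nabla_X\mu-2Bg(\Lam,X))\Id=0$ into ``trace-free and scalar parts'' to conclude both summands vanish. But $J\Nu{B}(X,J\Lam)$ is not trace-free in general --- its trace is a Ricci-type contraction of the curvature --- so taking the trace only expresses $\nabla_X\mu-2Bg(\Lam,X)$ in terms of that contraction. The paper instead observes that $g(\Nu{B}(X,J\Lam)Y,Z)$ satisfies the first Bianchi identity in $X,Y,Z$ while $(\nabla_X\mu-2Bg(\Lam,X))g(JY,Z)$ does not (for $m>1$), whence both sides vanish separately. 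Your handling of $(6)$ is in the right spirit but omits the step that $[\Nu{B}(X,Y),A]=0$ makes $\Nu{B}(X,Y)$ preserve the eigenspaces of $A$, which is what allows the sum $\sum_i\Nu{B}(X,Y)\grad_g\xi_i=0$ to be split termwise, and omits the converse direction $(6)\Rightarrow(5)$.
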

\begin{proof} (1)$\Leftrightarrow$(2) by equation~\eqref{eq:second}: if
$\nabla\Lam-BA$ commutes with $K(X,Y)$ for all $X,Y\in T_p M$, then it
commutes with all skew-hermitian endomorphisms of $T_p M$ and is hence a
multiple of the identity at $p$.

(2)$\Leftrightarrow$(3) by equation~\eqref{eq:third}, which reduces to
\begin{equation*}
g(\Nu{B}(X,J\Lam)Y,Z)=(\nabla_X\mu-2Bg(\Lam,X))g(JY,Z)
\end{equation*}
for all $X,Y,Z$: the left hand side satisfies the Bianchi identity in $X,Y,Z$
while the right hand side does not (for $n>1$), so they must vanish
independently.

(3)$\Rightarrow$(4) by equation~\eqref{eq:third} again: the extended
system~\eqref{eq:extsystem} implies $\Nu{B}(X,J\Lam)=0$.

(4)$\Rightarrow$(5) is immediate: if $\Lam$ is not identically zero, it is
nonzero on an open dense subset, because $J\Lam$ is a Killing vector field
by Lemma~\ref{lem:lambdakilling}.

(5)$\Rightarrow$(3). Given a nonzero $Z\in N(B)_p$, substitute $Y=Z$ and
$\mu=0$ in equation~\eqref{eq:second} to obtain $[K(X,Z),\nabla\Lam-BA]=0$.
Hence by Lemma~\ref{lem:Kbracket} there is a scalar $\mu=\mu(p)$ such that
$\nabla\Lam-BA=\mu\,\Id$ at $p$. This holds at every point of a dense subset for
functions $\mu,B$ defined on this subset. Moreover, $A$ is not proportional to
the identity at every point of a dense open set (this is straightforward to
show using \eqref{eq:main}---for a proof see \cite[Lemma 4]{FKMR}).  Then on a
neighbourhood $U$ of any point in this dense open set, $B$ and $\mu$ are
smooth functions (being solutions of an inhomogeneous linear system of maximal
rank with smooth coefficients). We need to show that $B$ is constant and
$\tau:=\d \mu-2Bg(\Lam,\cdot)$ is identically zero on $U$. For this, suppose
that a nonzero vector $Z$ is in the $B$-nullity of the curvature and insert
$\nabla\Lam=\mu\Id+BA$ into~\eqref{eq:third} to obtain
\begin{equation}\label{eq:third0}
J\Nu{B}(X,J\Lam)+\tau(X)\Id+\d B(X)A=0,
\end{equation}
and hence, by applying this identity to $Z$, $\tau(X)Z+\d B(X)AZ=0$. If $Z$ is
not an eigenvector of $A$, we have $\tau(X)=\d B(X)=0$ for all $X\in TU$ which
is what we wanted to show. We may thus assume $AZ=\xi Z$ for some function
$\xi$, so that $\tau=-\xi \d B$ and
\begin{equation}\label{eq:third1}\begin{split}
\Nu{B}(X,\Lam)&=\d B(JX)(A-\xi\Id)J\\
&=((A-\xi\Id)X)^\flat \otimes (\d B)^\sharp-\d B\otimes (A-\xi \Id)X,
\end{split}
\end{equation}
where $\alpha^\sharp$ denotes the metric dual of a $1$-form $\alpha$, and
the second line follows from the Bianchi symmetry satisfied by
$g(\Nu{B}(X,\Lam)Y,W)=g(\Nu{B}(Y,W)X,\Lam)$. Comparing the second and third
lines, it follows that $A-\xi\Id$ has complex rank $\leq 1$. It remains
to show the following.
\begin{lem}\label{low-rank} Suppose $A\in\Sol(g,J)$ and that on an open subset
  $U$\textup: $A$ is not parallel with exactly two \textup(distinct\textup)
  eigenvalues, both smooth, and $Z$ is an eigenvector of $A$ in the
  $B$-nullity of $g$ for smooth $B$. Then $\d B=0$ on $U$.
\end{lem}
Given this lemma, whose proof give below, we obtain also $\tau=-\xi \d B=0$ on
$U$, and hence the system \eqref{eq:extsystem} holds in a neighbourhood of
every point of an open dense subset for a (local) constant $B$ and a smooth
function $\mu$.  On the other hand, it was proven in \cite[\S 2.5]{FKMR} that
the constants $B$ are the same for each such neighbourhood. Taking the trace of
the second equation in \eqref{eq:extsystem}, we obtain $2m\mu=\trace
\nabla\Lam-B\trace (A)$, so that the functions $\mu$ coincide on overlaps and
patch together to a globally defined function. Hence the system
\eqref{eq:extsystem} holds everywhere on $M$ for a constant $B$ and a smooth
function $\mu$.

(1--5)$\Rightarrow$(6). Since $\Nu{B}(X,Y)\Lam=0$,
equation~\eqref{eq:lambda1} implies
\begin{equation}\label{eq:ZBongradients}
0=\sum_{i=1}^{l}\Nu{B}(X,Y)\grad_g \xi_i,
\end{equation}
where $\xi_1,\ldots \xi_l$ are the eigenvalues of $A$. It was shown in
\cite[Proposition 14]{ACG1} and \cite[Proposition 1]{MatRos1} that the
gradient $\grad_g \xi_i$ is contained in the eigenspace of $A$
corresponding to $\xi_i$. Since $[\Nu{B}(X,Y),A]=0$, $\Nu{B}(X,Y)$ leaves the
eigenspaces of $A$ invariant. Then wherever $\Nu{B}(X,Y)\grad_g \xi_i$
is nonzero, it is an eigenvector of $A$ corresponding to the eigenvalue
$\xi_i$ and \eqref{eq:ZBongradients} shows that
$\Nu{B}(X,Y)\grad_g \xi_i=0$.

(6)$\Rightarrow$(5). It was shown in \cite[Proposition 14]{ACG1} that
every nonconstant eigenvalue $\xi$ of $A\in\Sol(g,J)$ has
nonvanishing differential on an open and dense subset.

\medbreak
The final observation of the theorem follows because condition (5) is
independent of $A\in\Sol(g,J)$, and if $A$ is $\nabla$-parallel (i.e., the
corresponding $\Lam$ is zero), then equation~\eqref{eq:second} and
Lemma~\ref{lem:Kbracket} imply that $A$ is a multiple of the identity or
$B=0$.
\end{proof}

\begin{proof}[Proof of Lemma~\textup{\ref{low-rank}}] Since $A$ is nonparallel
(i.e., $\Lam\neq 0$), it has at least one nonconstant eigenvalue. We consider
first the case that $A$ has one nonconstant eigenvalue $\xi$ and one constant
eigenvalue, which we may assume to be zero. The $\xi$-eigenspace is
therefore spanned by $\Lam$, and if this is in the nullity, then
\eqref{eq:third0} implies $d B=0$. Thus we may assume $AZ=0$, hence
$\d\mu=2B g(\Lam,\cdot)= B\,\d\xi$, so that $\mu$ and $B$ are functions
of $\xi$.  For any $X$ with $AX=0$ we have
\begin{equation*}
\xi \mu  X = -(A-\xi\,\Id)\mu X = -(A-\xi\,\Id)\nabla_X\Lam
= (\nabla_X A-\d\xi(X))\Lam = g(\Lam,\Lam) X
\end{equation*}
since $\nabla\Lam=\mu\,\Id+B A$, $\d\xi(X)=0$ and $\nabla A$ is given
by~\eqref{eq:main}. Hence $\xi\mu=g(\Lam,\Lam)$.

On the other hand, using the Gray--O'Neill formulae~\cite{ACG1,Gray} or the
explicit form
\begin{equation*}
g=-\xi g_0 + \frac{d\xi^2}{\Theta(\xi)} + \Theta(\xi) \theta^2
\end{equation*}
of the metric, we obtain that
\begin{equation}\label{eq:curvature}
-4B K(X,Y)Z = R(X,Y)Z = R_0(X,Y)Z-\frac{4g(\Lam,\Lam)}{\xi^2}K(X,Y)Z,
\end{equation}
for $X,Y$ in the zero eigenspace of $A$, where $R_0$ denotes the curvature of
$g_0$ (the K\"ahler quotient by $J\Lam$), lifted to the zero eigenspace.  Thus
$R_0(X,Y)Z = 4(B\xi-\mu) K_0(X,Y)Z$, where $K_0=-K/\xi$ is the algebraic
constant holomorphic sectional curvature tensor of $g_0$. Taking the trace
over $X$ (on the zero eigenspace), $\mathrmsl{Ric}_0(Z)=2(m-1) (B\xi-\mu) Z$
and so $B\xi-\mu$ is independent of $\xi$, hence constant. This combines with
$\d\mu=B\,\d\xi$ to give $\d B=0$ as required.

Now we turn to the case when $A$ has two nonconstant eigenvalues $\xi_1$ and
$\xi_2$.  Note that in this case, $M$ is necessarily real $4$-dimensional. Let
$V_1=\grad_g\xi_1$, $V_2=\grad_g\xi_2$ and suppose that $V_2$ is contained in
the $B$-nullity of the curvature $R$. To compute $B$ and $\mu$, we apply $V_1$
and $V_2$ to the equation $\nabla\Lam=\mu\,\Id+BA$ to obtain the linear system
$m_1=\mu+\xi_1 B,$ $m_2=\mu+\xi_2 B$, where $m_1$, $m_2$ are the eigenvalues
of $\nabla\Lam$, i.e., $\nabla_{V_1}\Lam=m_1V_1$ and
$\nabla_{V_2}\Lam=m_2V_2$.  Hence,
\[
\mu=\frac{\xi_2m_1-\xi_1m_2}{\xi_2-\xi_1},\qquad B=\frac{m_1-m_2}{\xi_1-\xi_2}.
\]
To calculate $m_1,m_2$, we recall that $\Lam=\frac{1}{2}(V_1+V_2)$ and so
$\nabla_\Lam \Lam=\frac{1}{2}(m_1 V_1+m_2V_2)$, or, dually, $\d(g(\Lam,\Lam))
=m_1 \d\xi_1+m_2\d\xi_2$. The classification of hamiltonian $2$-forms
from~\S\ref{subsec:localclass} shows that, in a neighbourhood of almost every
point, $g$ takes the form
\[
g=\frac{\xi_1-\xi_2}{F_1(\xi_1)}\d\xi_1^2+\frac{\xi_2-\xi_1}{F_2(\xi_2)}\d\xi_2^2
+\frac{F_1(\xi_1)}{\xi_1-\xi_2}(\d t_1+\xi_2 \d t_2)^2
+\frac{F_2(\xi_2)}{\xi_2-\xi_1}(\d t_1+\xi_1 \d t_2)^2
\]
in local coordinates $\xi_1,\xi_2,t_1,t_2$.  From this, we obtain
$g(\Lam,\Lam)$ in terms of the functions $F_1,F_2$. Calculating
$\d(g(\Lam,\Lam))$ and comparing coefficients, we obtain
\begin{align}\label{eq:Bexplicit}
B=\frac{m_1-m_2}{\xi_1-\xi_2}
=\frac{(F_1'(\xi_1)+F_2'(\xi_2))(\xi_1-\xi_2)-2(F_1(\xi_1)-F_2(\xi_2))}
{4(\xi_1-\xi_2)^3}.
\end{align}
Replacing $X$ in \eqref{eq:third1} by the vector $JV_2$ in the nullity, we see
that $\d B(V_2)=0$, i.e., $B$ does not depend on the variable $\xi_2$.  Using
\eqref{eq:Bexplicit}, it is straightforward to show that the condition $\d
B/\d\xi_2=0$ is equivalent to
\begin{align}\label{eq:derivB}
0=F_2''(\xi_2)(\xi_1-\xi_2)^2+2(F_1'(\xi_1)+2F_2'(\xi_2))(\xi_1-\xi_2)
-6(F_1(\xi_1)-F_2(\xi_2)).
\end{align}
Taking three derivatives of this equation w.r.t.\ $\xi_1$ yields
$F_1^{(4)}(\xi_1)=0$, hence $F_1(\xi_1)$ is a polynomial of degree $\leq 3$.
Inserting this condition back into \eqref{eq:derivB}, a straightforward
calculation shows $F_1=F_2$. Inserting these polynomials into
\eqref{eq:Bexplicit} shows that $B$ is a constant.  This also follows
from~\cite{ACG1} where it is shown that $(g,J)$ has constant holomorphic
sectional curvature (and hence $B$ is constant) if $F_1=F_2$ is a polynomial
of degree $\leq 3$.
\end{proof}

\begin{rem}
Recall from Remark \ref{rem:affine} that $A$ not being parallel is necessary
for $(5)$. All other conditions are automatically fulfilled for parallel $A$,
in which case we have $\mu=B=0$.
\end{rem}
To relate this result to the local classification of metrics with hamiltonian
$2$-forms (see \S\ref{subsec:localclass}), observe that at each point in a
dense open set, the $J$-linear span of the gradients of the eigenvalues of $A$
is the tangent space to the orthotoric fibres of the metric $g$.

\begin{cor}\label{cor:CHSC-fibre} $A\in\Sol(g,J)$ satisfies the extended
system~\eqref{eq:extsystem} for $B\in\R$ if and only if $A$ is parallel
\textup(in which case, we may assume $B=0$\textup) or the orthotoric fibres of
$A$ are in the $B$-nullity of $g$. In particular, since these fibres are
totally geodesic, they have constant holomorphic sectional curvature $-4B$.
\end{cor}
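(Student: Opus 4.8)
The plan is to obtain the corollary as essentially a geometric repackaging of Theorem~\ref{thm:equivalence}, using the observation just recorded that, on a dense open set, the tangent space to the orthotoric fibres of $A$ is the $J$-linear span of the gradients $\grad_g\xi_i$ of the eigenvalues. First I would dispose of the parallel case. If $A$ is parallel then $\Lam=0$ and $\nabla A=0$, so~\eqref{eq:extsystem} holds trivially with $\mu=0$ and $B=0$; conversely, for non-scalar parallel $A$ the second equation of~\eqref{eq:extsystem} forces $B=0$, while for $A=c\,\Id$ every $B$ is admissible and we are free to take $B=0$. This matches the ``$A$ parallel'' alternative with its stipulation $B=0$, and reduces everything else to the case $\Lam\neq0$.

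For non-parallel $A$, the clause ``$A$ satisfies~\eqref{eq:extsystem} for a constant $B$'' is precisely condition~(3) of Theorem~\ref{thm:equivalence}, which that theorem proves equivalent to condition~(6): for every locally smooth eigenvalue $\xi$ of $A$, $\grad_g\xi$ lies in the $B$-nullity $N(B)$. I would then translate~(6) into the fibre statement. The gradients of the constant eigenvalues vanish and so lie in $N(B)$ automatically, so~(6) says exactly that each $\grad_g\xi_i$ attached to a nonconstant eigenvalue lies in $N(B)$; since $N(B)_p$ is $J$-invariant by Remark~\ref{rem:nullity}, this is equivalent to the entire $J$-linear span of these gradients lying in $N(B)$, i.e., by the observation above, to the orthotoric fibres being in the $B$-nullity. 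This gives the ``if and only if''.

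For the final assertion I would use that the orthotoric fibres are totally geodesic (as recalled in~\S\ref{subsec:localclass}). Membership of the fibre in the $B$-nullity means $R(X,Y)Z=-4B\,K(X,Y)Z$ whenever $X,Y,Z$ are tangent to a fibre, and since a totally geodesic submanifold has vanishing second fundamental form, the Gauss equation identifies the intrinsic curvature of the fibre with this restriction of $R$. As both the fibre tangent space and the algebraic tensor $K$ of~\eqref{eq:constholom} are $J$-invariant, $K$ restricts to the model constant-holomorphic-sectional-curvature tensor of the fibre metric; hence the fibre's intrinsic curvature equals $-4B$ times this model tensor, and the fibre has constant holomorphic sectional curvature $-4B$.

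The logical equivalence above is immediate once Theorem~\ref{thm:equivalence} is in hand, so I expect no serious obstacle there; the only point demanding an actual computation is the normalization underlying the last assertion, namely checking from~\eqref{eq:constholom} that $g(K(X,JX)JX,X)=g(X,X)^2$ for all $X$ (so that $K$ indeed models holomorphic sectional curvature $+1$). This is what pins the fibre constant to be exactly $-4B$ rather than some other scalar multiple of $B$.
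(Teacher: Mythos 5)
Your proposal is correct and follows essentially the same route the paper intends: the corollary is stated without a separate proof precisely because it is the combination of the equivalence (3)$\Leftrightarrow$(6) in Theorem~\ref{thm:equivalence}, the observation immediately preceding it that the fibre tangent space is the $J$-linear span of the eigenvalue gradients (with $N(B)_p$ being $J$-invariant by Remark~\ref{rem:nullity}), and the Gauss equation for the totally geodesic fibres. Your handling of the parallel case and your normalization check $g(K(X,JX)JX,X)=g(X,X)^2$, which pins the constant to $-4B$, are both correct.
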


By Definition~\ref{defn:CCB}, a K\"ahler metric is $\CC(B)$ for a constant $B$
if one of the conditions in Theorem \ref{thm:equivalence} is satisfied for
some non-parallel $A\in \Sol(g,J)$.  We next describe the conditions on the
parameters in formula~\eqref{eq:locclass} under which a K\"ahler metric is
$\CC(B)$.

For this, we first observe that the extended system~\eqref{eq:extsystem} is
equivalent to the special case $\tau_0=0$, $\tau_1=-4B$, $\tau_2=-4\mu$ of the
system~\cite[\S2.3, Equation (30)]{ACG1}, with the term ``$W^{\mathcal
  K}(\phi)$'' omitted.  Hence (the proof of)~\cite[Proposition 5]{ACG1}
applies to show that the polynomial
\begin{equation}\label{eq:inv-poly}
F(t) = -4 (Bt+\mu) p_A(t) - g(K,K(t)),
\end{equation}
has constant coefficients, where $p_A(t)$ is the characteristic polynomial of
$A$, $K=J\grad_g\sigma_1$ and $K(t)=J\grad_g p_A(t)$ (thus $K$ coincides with
the Killing vector field $2J\Lam$). To interpret this fact geometrically, we
next observe that any triple $(A,\Lam,\mu)\in \mathfrak{gl}(TM)\oplus TM\oplus
M\times\R$, with $A$ hermitian, defines a hermitian (bundle) metric on
$\{(\sigma,\rho)\in\Hom(TM,\C)\oplus M\times\C: \sigma(JX)=i\sigma(X)\}$, via
the expression
\begin{equation}
\begin{bmatrix} \rho \\ \sigma\end{bmatrix}^\dag
\begin{bmatrix} \mu & \Lam \\ \Lam & A \end{bmatrix}
\begin{bmatrix} \rho' \\ \sigma' \end{bmatrix}
:=\mu\overline\rho\rho' +\overline\sigma(\Lam)\rho'+\overline\rho\sigma'(\Lam)
+g(\overline \sigma\circ A, \sigma').
\end{equation}
When $B=-1$, this bundle may be identified with the (holomorphic) tangent
bundle of the complex cone over $(M,g,J)$ studied in~\cite{MatRos2}, which we
discuss in Appendix~\ref{sec:cone}.  For any $B\in\R$, the bundle carries a
connection $\cD$ defined by
\begin{equation}\label{eq:tractor}
\cD_X \begin{bmatrix} \rho \\ \sigma \end{bmatrix}
= \begin{bmatrix} \nabla_X\rho +\sigma(X) \\ \nabla_X \sigma
+Bg(X+iJX,\cdot)\rho \end{bmatrix}. 
\end{equation}
This connection induces the extended system~\eqref{eq:extsystem} in the
following sense (cf.~\cite[\S\S 4.1-4.2]{FKMR} in the case $B\neq 0$).
\begin{lem} For any sections $(A,\Lam,\mu)$ and $(\sigma,\rho)$ as above,
we have
\begin{multline*}
\partial_X \biggl(\begin{bmatrix} \rho \\ \sigma\end{bmatrix}^\dag
\begin{bmatrix} \mu & \Lam \\ \Lam & A \end{bmatrix}
\begin{bmatrix} \rho' \\ \sigma' \end{bmatrix}\biggr)
-\biggl(\cD_X\begin{bmatrix} \rho \\ \sigma\end{bmatrix}\biggr)^\dag
\begin{bmatrix} \mu & \Lam \\ \Lam & A \end{bmatrix}
\begin{bmatrix} \rho' \\ \sigma' \end{bmatrix}
-\begin{bmatrix} \rho \\ \sigma\end{bmatrix}^\dag
\begin{bmatrix} \mu & \Lam \\ \Lam & A \end{bmatrix}
\cD_X\begin{bmatrix} \rho' \\ \sigma' \end{bmatrix}\\
=\begin{bmatrix} \rho \\ \sigma\end{bmatrix}^\dag
\begin{bmatrix} \nabla_X\mu-2B g(\Lam,X) & \nabla_X\Lam-\mu X- BAX \\
\nabla_X\Lam-\mu X- BAX & \nabla_X A -\bigl(\begin{smallmatrix}
X^\flat\otimes\Lam+\Lam^\flat\otimes X\qquad\\
\;+JX^\flat\otimes J\Lam+J\Lam^\flat\otimes JX)\end{smallmatrix}\bigr)
\end{bmatrix}\begin{bmatrix} \rho' \\ \sigma' \end{bmatrix}
\end{multline*}
\end{lem}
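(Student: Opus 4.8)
The plan is to read the left-hand side as the covariant derivative of the hermitian pairing along $\cD$. Write $s=(\rho,\sigma)$, $s'=(\rho',\sigma')$, and let $H(s,s')$ denote the pairing determined by the hermitian matrix $\bigl[\begin{smallmatrix}\mu&\Lam\\\Lam&A\end{smallmatrix}\bigr]$ as in the displayed definition. The three terms on the left are then precisely $(\cD_X H)(s,s')=\partial_X\bigl(H(s,s')\bigr)-H(\cD_X s,s')-H(s,\cD_X s')$, the non-metricity of $\cD$ with respect to $H$. The content of the lemma is thus that this non-metricity is the pairing associated with the matrix whose entries are the left-hand sides of the extended system~\eqref{eq:extsystem}, the bottom-right entry being the residual of the main equation~\eqref{eq:main}; in particular $H$ is $\cD$-parallel if and only if $(A,\Lam,\mu)$ solves~\eqref{eq:extsystem}.

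First I would expand $\partial_X\bigl(H(s,s')\bigr)$ by the Leibniz rule. Since $\nabla$ is the Levi-Civita connection and $\nabla J=0$, the pairing is compatible with $\nabla$: differentiating the coefficients $\mu,\Lam,A$ and differentiating the arguments $\rho,\sigma,\rho',\sigma'$ are independent, so
\begin{equation*}
\partial_X\bigl(H(s,s')\bigr)=H_{\nabla_X}(s,s')+H(\nabla_X s,s')+H(s,\nabla_X s'),
\end{equation*}
where $\nabla_X s:=(\nabla_X\rho,\nabla_X\sigma)$ and $H_{\nabla_X}$ is the pairing determined by the differentiated matrix $\bigl[\begin{smallmatrix}\nabla_X\mu&\nabla_X\Lam\\\nabla_X\Lam&\nabla_X A\end{smallmatrix}\bigr]$. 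Subtracting the two $\cD$-terms and inserting the definition~\eqref{eq:tractor} of $\cD$, the $\nabla$-on-arguments contributions cancel and there remains
\begin{equation*}
(\cD_X H)(s,s')=H_{\nabla_X}(s,s')+H(\delta s,s')+H(s,\delta s'),
\end{equation*}
with $\delta s=(\delta\rho,\delta\sigma)=\bigl(-\sigma(X),\,-Bg(X+iJX,\cdot)\rho\bigr)$ and $\delta s'=(\delta\rho',\delta\sigma')$ defined analogously from $s'$.

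It remains to evaluate the two algebraic correction terms and check that, added to $H_{\nabla_X}$, they reproduce exactly the asserted matrix. Here I would use the type conditions $\sigma(JY)=i\sigma(Y)$ (hence $\overline\sigma(JY)=-i\overline\sigma(Y)$) and the $(g,J)$-hermiticity of $A$ (namely $A$ symmetric with $AJ=JA$). The governing computations are the collapse $\sigma'(AX)-i\sigma'(JAX)=2\sigma'(AX)$, which turns the term $\delta\sigma$ composed with $A$ into the off-diagonal correction $-BAX$, and the identity $g\bigl(\overline\sigma\circ(X^\flat\otimes\Lam+\Lam^\flat\otimes X+JX^\flat\otimes J\Lam+J\Lam^\flat\otimes JX),\sigma'\bigr)=2\bigl(\overline\sigma(\Lam)\sigma'(X)+\overline\sigma(X)\sigma'(\Lam)\bigr)$, which matches the contributions of $\delta\rho=-\sigma(X)$ and $\delta\rho'=-\sigma'(X)$ paired against $\Lam$, thereby producing the four-term residual of~\eqref{eq:main} in the $A$-slot. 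The two components $\delta\sigma,\delta\sigma'$ paired against $\rho',\rho$ have their imaginary parts cancel and combine to give the diagonal correction $-2Bg(\Lam,X)$, while the $\mu$-contributions give $-\mu X$ in the off-diagonal slot.

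The main obstacle is this last step: keeping careful track of the conjugations, the factors of $i$, and above all the factors of $2$ generated by complex-linearity. These factors only balance once the normalization of the hermitian pairing in the $g(\overline\sigma\circ A,\sigma')$ slot is fixed consistently (the natural hermitian normalization on $(1,0)$-forms); with that normalization the factor $2$ from the four-term identity and the factor $2$ from the collapse $\sigma'(AX)-i\sigma'(JAX)=2\sigma'(AX)$ are absorbed uniformly, and the single powers of $B$ and of the four-term demanded by the right-hand side emerge. Everything else is routine Leibniz bookkeeping.
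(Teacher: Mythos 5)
Your argument is correct and is precisely the computation the paper has in mind: the paper's entire proof is the sentence ``The proof is a straightforward computation,'' and your decomposition of the left-hand side into $H_{\nabla_X}$ plus the two algebraic correction terms $H(\delta s,s')+H(s,\delta s')$, evaluated using the type condition $\sigma(JY)=i\sigma(Y)$ and the $(g,J)$-hermiticity of $A$, is exactly that computation. You are also right that the only delicate point is the normalization of $g$ on complex $1$-forms in the slot $g(\overline\sigma\circ A,\sigma')$ (it must carry a factor $\tfrac12$ relative to the naive bilinear extension so that the factors of $2$ from the collapse $\sigma'(AX)-i\sigma'(JAX)=2\sigma'(AX)$ and from the four-term identity are absorbed consistently, matching the single copies of $-BAX$ and of $\overline\sigma(X)\sigma'(\Lam)+\overline\sigma(\Lam)\sigma'(X)$ demanded by the right-hand side).
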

\noindent The proof is a straightforward computation. Up to a normalization
constant, the function $F(t)$ is the (complex) determinant of the hermitian
form on $\Hom(TM,\C)\oplus M\times\C$ defined by $(A-t\,\Id,\Lam,\mu+Bt)$, so
its roots are the relative eigenvalues of the hermitian forms defined by
$(A,\Lam,\mu)$ and $(\Id,0,-B)$. This gives another proof that $F(t)$ has
constant coefficients when $(A,\Lam,\mu)$ solves~\eqref{eq:extsystem}, and
further shows that the relative eigenspaces are $\cD$-parallel subbundles of
$\Hom(TM,\C)\oplus M\times\C$.

\begin{thm}\label{thm:condforCCB} Let $(g,J,\omega)$ be a K\"ahler metric with
a non-parallel hamiltonian $2$-form, given explicitly by~\eqref{eq:locclass}
on a dense open set.  Then $g$ is $\CC(B)$ if and only if
$\Theta_j(t)=\Theta(t)$, a polynomial of degree $\leq\ell+1$
\textup(independent of $j$\textup) with leading coefficient $-4B$, and
$\Theta(\eta)=0$ for all constant eigenvalues $\eta$.
\end{thm}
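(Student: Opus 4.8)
The plan is to match the abstract characterization of $\CC(B)$ in Theorem~\ref{thm:equivalence} against the explicit data in~\eqref{eq:locclass}, using the invariant polynomial~\eqref{eq:inv-poly} as the bridge. Throughout, write $p_A=p_c\,p_{\mathrm{nc}}$ with $p_c(t)=\prod_\eta(t-\eta)^{m_\eta}$ and $p_{\mathrm{nc}}(t)=\prod_i(t-\xi_i)$, and recall $K=J\grad_g\sigma_1=2J\Lam$ with $\Lam=\tfrac12\grad_g\sigma_1$. For the ``only if'' direction, if $g$ is $\CC(B)$ then by Definition~\ref{defn:CCB} and the final statement of Theorem~\ref{thm:equivalence} the endomorphism $A$ underlying~\eqref{eq:locclass} solves~\eqref{eq:extsystem}, so $F(t)$ has constant coefficients. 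First I would establish $F(\xi_j)=p_c(\xi_j)\,\Theta_j(\xi_j)$ by setting $t=\xi_j$ in~\eqref{eq:inv-poly}: the term $-4(Bt+\mu)p_A(t)$ vanishes, and evaluating $g(K,K(t))$ from $\grad_g p_A(t)=-\sum_i\tfrac{p_A(t)}{t-\xi_i}\grad_g\xi_i$ together with the norms $|\grad_g\xi_i|^2=\Theta_i(\xi_i)/\Delta_i$ read off from~\eqref{eq:locclass} leaves only $g(K,K(\xi_j))=-p_c(\xi_j)\Theta_j(\xi_j)$. Since $F$ is one fixed polynomial and $p_c$ is fixed, this forces $\Theta_j(t)=F(t)/p_c(t)$ for every $j$; hence all $\Theta_j$ equal a single polynomial $\Theta$ of degree $\le(\deg F)-(\deg p_c)=\ell+1$, whose leading coefficient is that of $F$, namely $-4B$ (as $p_c$ is monic).

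To obtain the last condition $\Theta(\eta)=0$ I would turn to the base directions, which $F$ does not see. Evaluating the second equation of~\eqref{eq:extsystem} on a vector $X$ in the $\eta$-eigenspace of $A$ gives $\nabla_X\Lam=(\mu+B\eta)X$, since $AX=\eta X$. Independently, $\nabla_X\Lam=\tfrac12\sum_i\nabla_X\grad_g\xi_i$, and for horizontal $X$ and vertical $\grad_g\xi_i$ the horizontal part of $\nabla_X\grad_g\xi_i$ is governed by the Gray--O'Neill tensor~\eqref{eq:GrayONeill}. Feeding in $\Omega_r=\sum_\eta(-1)^r\eta^{\ell-r}\omega_\eta$ and the elementary identity $\sum_{r=1}^\ell(-1)^r\eta^{\ell-r}\sigma_{r-1}(\hat\xi_i)=-\prod_{k\ne i}(\eta-\xi_k)$, the sum over $i$ reassembles into the value at $\eta$ of the Lagrange interpolant $L$ of $\{\Theta_i(\xi_i)\}$ through the nodes $\xi_i$. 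Comparing the two expressions for $\nabla_X\Lam$, and using $\Theta_i=\Theta$ from the previous step, collapses precisely to $\Theta(\eta)=0$.

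For the ``if'' direction I would reverse this. Given $\Theta_j=\Theta$ of degree $\le\ell+1$ with leading coefficient $-4B$ and $\Theta(\eta)=0$, the function $\mu$ is forced by requiring $-4(Bt+\mu)p_{\mathrm{nc}}(t)+L(t)=\Theta(t)$ (so $\mu$ is essentially the subleading coefficient of $\Theta$), whence $F=p_c\Theta$ has constant coefficients. It then remains to verify~\eqref{eq:extsystem}, which I would do through Theorem~\ref{thm:equivalence} by checking that each $\grad_g\xi_i$ lies in $N(B)$: the fibre--fibre components of $\Nu{B}(\cdot,\cdot)\grad_g\xi_i$ vanish because the common $\Theta$ of degree $\le\ell+1$ makes the orthotoric fibres of constant holomorphic sectional curvature (Corollary~\ref{cor:CHSC-fibre}), while the components involving base directions vanish by the Gray--O'Neill computation of the previous paragraph, now run with $\Theta(\eta)=0$ as input.

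The step I expect to be the main obstacle is the base-direction analysis producing $\Theta(\eta)=0$: it relies on the Gray--O'Neill formula~\eqref{eq:GrayONeill} for the mixed curvature together with the symmetric-function bookkeeping that turns the $\Omega_r$ into an interpolation through the nonconstant eigenvalues, and it is the only place where the constant eigenvalues $\eta$ genuinely enter—$F$ alone determines $\Theta=F/p_c$ and its leading coefficient $-4B$, but is blind to whether $\Theta$ vanishes at the $\eta$. A secondary point is to ensure the converse rests on a genuine sufficiency statement (that the nullity conditions of Theorem~\ref{thm:equivalence} imply~\eqref{eq:extsystem}, not merely follow from it), for which the cleanest route is the direct check that $\grad_g\xi_i\in N(B)$ described above rather than a converse to~\cite[Proposition 5]{ACG1}.
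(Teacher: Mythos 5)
Your ``only if'' direction is essentially correct and takes a genuinely different route from the paper's. The paper first deduces that the orthotoric fibres have CHSC (via Corollary~\ref{cor:CHSC-fibre}) and applies \cite[Proposition 18]{ACG1} fibrewise to get $\Theta_j=F/p_c$; it then obtains $\Theta(\eta)=0$ structurally, by observing that the relative $\eta$-eigenspace is $\cD$-parallel for the connection~\eqref{eq:tractor} whereas $\ker(A-\eta\,\Id)$ cannot be, so the multiplicity of $\eta$ in $F$ strictly exceeds its multiplicity in $p_c$. Your two computations replace both steps: the evaluation $F(\xi_j)=p_c(\xi_j)\Theta_j(\xi_j)$ does check out (using $\grad_g p_A(t)=-\sum_i\frac{p_A(t)}{t-\xi_i}\grad_g\xi_i$, the mutual orthogonality of the $\grad_g\xi_i$, and $|\grad_g\xi_i|^2=\Theta_i(\xi_i)/\Delta_i$), and pins each $\Theta_j$ down as the restriction of the constant-coefficient polynomial $F/p_c$ to the range of $\xi_j$; and your Gray--O'Neill evaluation of the horizontal part of $\nabla_X\Lam$ on the $\eta$-eigendistribution does yield $L(\eta)=4(\mu+B\eta)p_{\mathrm{nc}}(\eta)$ for your Lagrange interpolant $L$, which combined with $\Theta(t)=-4(Bt+\mu)p_{\mathrm{nc}}(t)+L(t)$ gives $\Theta(\eta)=0$. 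This is more computational but self-contained, and it is a legitimate alternative to the tractor argument.

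The gap is in the converse. You claim the components of $\Nu{B}(\cdot,\cdot)\grad_g\xi_i$ involving base directions ``vanish by the Gray--O'Neill computation of the previous paragraph''. That computation is first order: it evaluates a contraction of the O'Neill tensor $C$ itself, i.e.\ a piece of the connection. The nullity condition is second order: the components $g(\Nu{B}(X,Y)Z,W)$ with $Z$ vertical and one or more of $X,Y,W$ horizontal involve derivatives of $C$, terms quadratic in $C$, and the curvatures of the base metrics $g_\eta$, none of which are controlled by your identity for the horizontal part of $\nabla_X\Lam$. (That identity verifies one component of the second equation of~\eqref{eq:extsystem}, but not $\grad_g\xi_i\in N(B)$, nor the vertical and mixed components of $\nabla\Lam=\mu\Id+BA$.) The paper closes exactly this hole with a separate idea: the Gray--O'Neill curvature formulae show that every component of $R$ other than the purely horizontal one depends on the base metrics only to first order at a point, so one may replace each $g_\eta$ pointwise by a CHSC metric whose curvature is the value prescribed by $\Theta'(\eta)$; the modified metric then has CHSC by \cite[Proposition 17]{ACG1}---this is precisely where $\Theta(\eta)=0$ and $\deg\Theta\le\ell+1$ enter---so its fibres lie in the $B$-nullity, and the relevant curvature components agree with those of $g$. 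You need this replacement trick, or an explicit computation of all the mixed curvature components, to make your sufficiency argument go through; your framing of the converse via condition (5)/(6) of Theorem~\ref{thm:equivalence} is otherwise the right one.
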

\begin{proof} If $g$ is $\CC(B)$ then the (totally geodesic) orthotoric fibres
have constant holomorphic sectional curvature (CHSC). The hamiltonian $2$-form
restricts to a hamiltonian $2$-form on each fibre whose characteristic
polynomial is $p_A(t)/p_c(t)$. Applying~\cite[Proposition 18]{ACG1} fibrewise,
we thus have $\Theta_j(t)=\Theta(t):=F(t)/p_c(t)$ for all $j$ (where we recall
that $p_c(t)$ is the monic polynomial whose roots are the constant eigenvalues
$\eta$ of $A$). It remains to show that any root $\eta$ of $p_c(t)$ is a root
of $\Theta(t)$, i.e., the multiplicity of $\eta$ as a root of $F(t)$ is
greater than its multiplicity as a root of $p_c(t)$. The latter is the
dimension of the kernel of $A-\eta\,\Id$ in $\Hom(TM,\C)$ which is a subspace
$U$ of the relative $\eta$-eigenspace, i.e., the kernel of the hermitian form
defined by $(A-\eta\,\Id,\Lam,\mu+B\eta)$. However by~\eqref{eq:tractor}, $U$
cannot be $\cD$-parallel, so the dimension of the relative $\eta$-eigenspace
is strictly larger, hence so is the multiplicity of $\eta$ as a root of
$F(t)$.

Conversely, if $\Theta_j(t)=\Theta(t)$ as stated, then the orthotoric fibres
belong to the $B$-nullity of $g$. To see this, observe that the Gray--O'Neill
curvature formulae~\cite{Gray,ONeill} (with Gray--O'Neill tensor
\eqref{eq:GrayONeill}) imply that all components of the curvature of $g$,
apart from the purely horizontal part, depend on the base metrics $g_\eta$ in
\eqref{eq:locclass} only to first order at each point.  Hence, to compute
$R(X,Y)Z$ for $Z$ vertical, we may use a metric $\tilde g$ which agrees with
$g$ at a given point, but where we replace the base metrics $g_\eta$ with
metrics $\tilde g_\eta$ which have CHSC equal to $\Theta'(\eta)$ at that
point. By~\cite[Proposition 17]{ACG1}, $\tilde g$ has CHSC given by a multiple
of $B$, hence the fibres are in the $B$-nullity. Consequently the same holds
for $g$.
\end{proof}
\begin{cor}\label{cor:degree} Let $(g,J)$ be a $\CC(B)$ K\"ahler metric
\textup(e.g., with $D(g,J)\geq 3$\textup) which is weakly Bochner-flat
\textup(or is Bochner-flat\textup).  Then $g$ is K\"ahler--Einstein
\textup(or has constant holomorphic sectional curvature, respectively\textup).
\end{cor}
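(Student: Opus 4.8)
The plan is to reduce the corollary to a comparison of polynomial degrees, feeding the $\CC(B)$ constraint from Theorem~\ref{thm:condforCCB} into the curvature characterizations~(1) and~(2) recorded in~\S\ref{subsec:localclass}. First I would use Definition~\ref{defn:CCB} to fix a non-parallel $A\in\Sol(g,J)$ (so $\Lam\neq0$, and by~\eqref{eq:lambda1} at least one eigenvalue of $A$ is nonconstant, giving $\ell\geq1$), and write $g$ in the local form~\eqref{eq:locclass} relative to this $A$ on a dense open set. Theorem~\ref{thm:condforCCB} then tells me that the fibre polynomials agree, $\Theta_j(t)=\Theta(t)$, with $\deg\Theta\leq\ell+1$ and $\Theta(\eta)=0$ at every constant eigenvalue $\eta$. (If one starts instead from $D(g,J)\geq3$, Theorem~\ref{thm:extsystem} supplies this $\CC(B)$ structure.) Both assertions of the corollary should then drop out by matching this degree bound against the thresholds appearing in~(1) and~(2).

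For the Bochner-flat case I expect the argument to be immediate. If $g$ is Bochner-flat, then characterization~(1), applied to the classification data of the chosen $A$, states that $g$ has constant holomorphic sectional curvature precisely when, in addition to the Bochner-flat conditions, $\deg\Theta\leq\ell+1$. But this inequality is exactly what Theorem~\ref{thm:condforCCB} has already provided, so $g$ is CHSC.

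The weakly Bochner-flat case carries the only real computation. Here characterization~(2) produces the polynomial $\Psi(t)=(p_c\Theta)'(t)/p_c(t)$ of degree $\leq\ell+1$ and states that $g$ is K\"ahler--Einstein precisely when, in addition, $\deg\Psi\leq\ell$; so it suffices to sharpen the degree bound on $\Psi$ by one. I would argue as follows: from $\deg\Theta\leq\ell+1$ we get $\deg(p_c\Theta)\leq\deg p_c+\ell+1$, hence $\deg(p_c\Theta)'\leq\deg p_c+\ell$; and since $\Theta(\eta)=0$ at each root $\eta$ of $p_c$ forces $p_c\mid(p_c\Theta)'$, the quotient $\Psi$ is a genuine polynomial, necessarily of degree $\leq\ell$. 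By~(2), $g$ is then K\"ahler--Einstein.

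The only point requiring care is this final degree bookkeeping, and even it is routine: the effect of the $\CC(B)$ hypothesis is to lower the admissible degree of $\Theta$ from $\leq\ell+2$ to $\leq\ell+1$ relative to a generic Bochner-flat metric, and correspondingly to lower $\deg\Psi$ from $\leq\ell+1$ to $\leq\ell$ relative to a generic weakly Bochner-flat metric---exactly bridging the gap to the CHSC and K\"ahler--Einstein thresholds. I anticipate no genuine obstacle once Theorem~\ref{thm:condforCCB} and the characterizations~(1),~(2) are in hand.
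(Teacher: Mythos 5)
Your proof is correct and follows exactly the route the paper intends: the corollary is stated without proof immediately after Theorem~\ref{thm:condforCCB} precisely because it follows by matching the degree bound $\deg\Theta\leq\ell+1$ and the vanishing $\Theta(\eta)=0$ from that theorem against the CHSC and K\"ahler--Einstein thresholds in characterizations (1) and (2) of \S\ref{subsec:localclass}. The only point of substance, the divisibility $p_c\mid(p_c\Theta)'$ (which follows since each $\eta$ is a root of $p_c'$ of order $m_\eta-1$ and of $\Theta$ of order $\geq1$), is handled correctly, so $\Psi$ is a polynomial of degree $\leq\ell$ as required.
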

Recall from \cite{ACG1} that a K\"ahler metric $(g,J)$ of dimension $2m$ is
\emph{orthotoric} if it admits a hamiltonian $2$-form having $m$ nonconstant
eigenvalues $\xi_1,\ldots\xi_m$ (these metrics are also
``K\"ahler--Liouville''---see~\cite{KiyTop}).
\begin{cor}\label{cor:degree0} Let $(g,J)$ be a  $\CC(B)$ K\"ahler metric
\textup(e.g., with $D(g,J)\geq 3$\textup) which is orthotoric. Then $g$ has
constant holomorphic sectional curvature.
\end{cor}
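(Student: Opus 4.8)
The plan is to use the fact that, for an orthotoric metric, a hamiltonian $2$-form with $m$ nonconstant eigenvalues has no constant eigenvalues at all, so its orthotoric fibres fill the entire manifold; I would then invoke Corollary~\ref{cor:CHSC-fibre} to conclude that these fibres---hence all of $M$---must have constant holomorphic sectional curvature once $g$ is $\CC(B)$.

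First I would fix an orthotoric $A_0\in\Sol(g,J)$, that is, a hermitian endomorphism whose corresponding hamiltonian $2$-form has $m$ nonconstant eigenvalues $\xi_1,\ldots,\xi_m$. Since $\dim_\C M=m$ and each nonconstant eigenvalue has complex multiplicity one (Remark~\ref{rem:affine}), these eigenvalues already exhaust the tangent space, so there are no constant eigenvalues; in particular $\Lam\neq 0$, so $A_0$ is non-parallel, and in the classification~\eqref{eq:locclass} the base factors are absent, so the orthotoric fibres of $A_0$ coincide with $M$.

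Next I would transfer the $\CC(B)$ hypothesis from whichever endomorphism witnesses it to $A_0$ itself. By Definition~\ref{defn:CCB} some non-parallel element of $\Sol(g,J)$ solves the extended system~\eqref{eq:extsystem} for the given $B$; but the closing assertion of Theorem~\ref{thm:equivalence} guarantees that condition~(3) then holds for \emph{every} $A\in\Sol(g,J)$ with the same constant $B$. Applying this to $A_0$, I conclude that $A_0$ solves~\eqref{eq:extsystem}. Since $A_0$ is non-parallel, Corollary~\ref{cor:CHSC-fibre} shows its orthotoric fibres lie in the $B$-nullity of $g$ and, being totally geodesic, carry constant holomorphic sectional curvature $-4B$. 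As these fibres are all of $M$, the metric $g$ has constant holomorphic sectional curvature $-4B$, as claimed.

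This is really a direct corollary of the preceding machinery, so I do not expect a serious technical obstacle; the one point that needs care is that the orthotoric endomorphism $A_0$ need not be the element of $\Sol(g,J)$ originally used to verify the $\CC(B)$ property, and these could a priori be independent. That gap is closed precisely by the $A$-independence built into Theorem~\ref{thm:equivalence}. As an alternative, I could argue via Theorem~\ref{thm:condforCCB}, which forces $\Theta_j(t)=\Theta(t)$ to be a single polynomial of degree $\leq\ell+1$; the characterization of orthotoric metrics of constant holomorphic sectional curvature recalled in~\S\ref{subsec:localclass} would then give the same conclusion.
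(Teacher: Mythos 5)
Your proposal is correct and follows essentially the route the paper intends: since an orthotoric $A_0$ has $m$ nonconstant eigenvalues of complex multiplicity one, there are no constant eigenvalues, the orthotoric fibre is all of $M$, and the $\CC(B)$ property (transferred to $A_0$ by the final assertion of Theorem~\ref{thm:equivalence}) together with Corollary~\ref{cor:CHSC-fibre} forces $R=-4BK$ everywhere, i.e., CHSC. Your alternative via Theorem~\ref{thm:condforCCB} and \S\ref{subsec:localclass}(1) is the same argument in the language of the classification, so there is nothing to add.
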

\begin{rem}
An analog of the corollary in real projective geometry, which is also true
under more general assumptions, can be found in \cite{Fubini}.
\end{rem}

Theorem~\ref{thm:condforCCB} has the following global consequence.

\begin{thm}\label{thm:orbifold} Let $M$ be a closed connected $2m$-orbifold
  \textup($2m\geq 4$\textup) and suppose $(g,J)$ is a $\CC(B)$ K\"ahler metric
  on $M$. Then $(M,g,J)$ is an orbifold quotient of $\C P^m$ with a
  Fubini--Study metric.
\end{thm}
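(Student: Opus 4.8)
The plan is to show that compactness forces $(M,g,J)$ to have constant holomorphic sectional curvature (CHSC), necessarily positive, so that it is an orbifold quotient of $(\C P^m,\text{Fubini--Study})$. The point is that Theorem~\ref{thm:condforCCB} constrains only the orthotoric fibres of \eqref{eq:locclass} (forcing $\Theta_j=\Theta$ with the stated properties) and leaves the base metrics $g_\eta$ free, so a general \emph{non-compact} $\CC(B)$ metric need not be CHSC. Compactness is therefore essential and global, and I would exploit it through the cone construction.

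First I would determine the sign of $B$. Put $f=\trace A$, so $\Lam=\tfrac{1}{4}\grad_g f$ by \eqref{eq:lambda}. The last two equations of \eqref{eq:extsystem} give $\trace\nabla\Lam=2m\mu+Bf$ and $\d\mu=\tfrac{B}{2}\,\d f$, hence $\mu=\tfrac{B}{2}f+c$ for a constant $c$ and $\Delta f=4(m+1)Bf+8mc$, where $\Delta=\trace_g\nabla\d$. The case $B=0$ is impossible, since it would make $f$ (and hence $\Lam$) constant after integrating over the closed orbifold $M$; for $B\neq0$ the function $\tilde f=f+\tfrac{2mc}{(m+1)B}$ satisfies $\Delta\tilde f=4(m+1)B\,\tilde f$, and integrating $\int_M|\grad_g\tilde f|^2=-4(m+1)B\int_M\tilde f^2$ with $\grad_g\tilde f=4\Lam\not\equiv0$ forces $B<0$. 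Rescaling, I may assume $B=-1$.

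With $B=-1$ I would invoke the cone construction of Appendix~\ref{sec:cone}: $\Sol(g,J)$ is isomorphic to the space of parallel hermitian endomorphisms of the K\"ahler cone $(\hat M,\hat g,\hat J)$, a metric cone whose link $P$ is compact (as $M$ is closed) and hence complete. Because $A$ is non-parallel it is independent of $\Id$, so this space has dimension $\geq2$ and therefore contains a parallel hermitian endomorphism $\hat A$ that is not a multiple of the identity. The (constant) eigenvalues of $\hat A$ split $T\hat M$ into nontrivial orthogonal parallel distributions, so the holonomy of $\hat M$ is reducible. By Gallot's theorem, a metric cone with reducible holonomy over a complete base is flat; hence $\hat M$ is flat. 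A flat K\"ahler cone over a complete Sasaki orbifold forces the link to be an orbifold spherical space form with its standard structure, so $\hat M$ is an orbifold quotient of $\C^{m+1}\setminus\{0\}$; taking the K\"ahler (Sasaki) quotient by the Reeb circle generated by $J\Lam$, which recovers $(M,g,J)$, then exhibits it as an orbifold quotient of $(\C P^m,\text{Fubini--Study})$, consistent with the fibres having CHSC $-4B=4>0$.

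I expect the main obstacle to be the last two steps carried out honestly in the orbifold category: verifying that the cone/Sasaki construction and Gallot's rigidity survive orbifold singularities (which are of real codimension $\geq2$, so the holonomy-splitting arguments should go through), and that the Reeb reduction of the flat cone yields precisely a Fubini--Study orbifold quotient. An alternative that stays within the local picture is to feed $B<0$ and Theorem~\ref{thm:condforCCB} into \eqref{eq:locclass}: compactness should determine the roots of the common $\Theta$ at the ends of the ranges of the $\xi_i$ and pin the base metrics $g_\eta$ to have CHSC $-\Theta'(\eta)$, again yielding CHSC for $g$ and the same conclusion via orbifold uniformisation.
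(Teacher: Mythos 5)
Your preliminary step is fine: taking traces in \eqref{eq:extsystem} to get $\Delta f=4(m+1)Bf+8mc$ for $f=\trace A$ and integrating over the closed orbifold does rule out $B\geq 0$ (this is essentially the reduction to $-Bg>0$ carried out in \cite{FKMR} and recalled in the remark following the theorem). The genuine gap is in the main route, at the point where you pass to ``the K\"ahler cone $(\hat M,\hat g,\hat J)$, whose link $P$ is compact (as $M$ is closed) and hence complete.'' The cone of \S\ref{subsec:cone} is only a \emph{local} construction: it requires a $1$-form $\tau$ with $\d\tau=2\omega$, and on a closed K\"ahler orbifold $\omega$ is never exact, while replacing $\R\times M$ by a principal $S^1$-orbibundle requires $[\omega]$ to be (a multiple of) an integral class --- which is not part of the hypotheses. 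Without a globally defined complete link, Gallot's theorem simply does not apply, and this is not a technicality: Theorem~\ref{thm:condforCCB} produces plenty of local $\CC(-1)$ metrics that are not CHSC, whose (local) cones are holonomy-reducible yet non-flat, so completeness of the link is exactly where all the global content sits. Your closing caveat worries about orbifold singularities and the Reeb reduction, but the real obstruction is upstream of both.

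Your fallback sketch (``feed $B<0$ and Theorem~\ref{thm:condforCCB} into \eqref{eq:locclass}; compactness should determine the roots of $\Theta$ and pin the base metrics $g_\eta$ to have CHSC $-\Theta'(\eta)$'') is in fact the paper's actual argument, but the clause ``compactness should determine\dots'' is precisely the hard step and is left unproved. The paper supplies it from the global classification of \cite[Section 2]{ACGT2} (extended to orbifolds via \cite{LT}): the universal orbifold cover blows up to a toric orbibundle over a complete K\"ahler product $S$, and because every constant eigenvalue $\eta$ is a root of $\Theta$, \cite[Proposition 6]{ACGT2} forces each factor of $S$ to be a complex projective space of CHSC $-\Theta'(\eta)$; by the criteria of \S\ref{subsec:localclass}(1) this makes $g$ itself CHSC, and positivity then comes from Bochner's argument applied to the hamiltonian Killing field $J\Lam$ (which has zeros). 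Note also that the paper's proof never needs to determine the sign of $B$ in advance --- positivity is a consequence, not an input. If you want to salvage the cone picture, the correct global substitute (in the manifold case) is the Tanno equation route of \cite{FKMR,tanno} mentioned in the remark after the theorem, not Gallot's theorem.
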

\begin{proof} By assumption, $M$ admits a hamiltonian $2$-form of order
$\ell\geq 1$. The theory of~\cite[Section 2]{ACGT2}, which extends to
orbifolds following~\cite{LT}, shows that the universal orbifold cover of $M$
has a blow-up $\hat M$ which is a bundle of (connected) toric orbifolds over
an orbifold $S$ which is a complete K\"ahler product over the constant
eigenvalues $\eta$ of $A$. Since blow-up does not change the orbifold
fundamental group, $\hat M$ is a simply connected orbifold, hence so is $S$
(since the fibres of $\hat M\to S$ are connected).  Now, since every constant
eigenvalue $\eta$ is a root of the function $\Theta$ of
Theorem~\ref{thm:condforCCB}, it follows from~\cite[Proposition 6]{ACGT2} (or
rather, its proof, extended straightforwardly to orbifolds) that $S$ is a
K\"ahler product of complex projective spaces where the K\"ahler metric on the
factor corresponding to a root $\eta$ has CHSC $-\Theta'(\eta)$
(see~\cite[Theorem 5(iv--v)]{ACGT2}).  As discussed
in~\S\ref{subsec:localclass}(1), these are precisely the conditions (given
that $\Theta$ is a polynomial of degree $\leq\ell+1$ vanishing on the constant
eigenvalues $\eta$) which ensure that the metric on $M$ has
CHSC~\cite{ACG1}. (This is not a coincidence: the Fubini--Study metric on $\C
P^m$ admits hamiltonian $2$-forms of any order $0\leq \ell\leq m$.)  Since
$K=J\Lam$ is a nonparallel Killing vector field on $M$ (it is hamiltonian,
hence has zeros), the curvature of $g$ must be positive by Bochner's argument.
Hence the universal cover of $M$ is isometric to $\C P^m$ with a Fubini--Study
(positive CHSC) metric.
\end{proof}
\begin{cor} Let $(M,g,J)$ be a closed connected K\"ahler orbifold of
dimension $2m\geq 4$ and mobility $D(g,J)\geq 3$.  Then either $M$ is an
orbifold quotient of $\C P^m$ with a Fubini--Study metric, or every K\"ahler
metric c-projectively equivalent to $g$ is affinely equivalent to $g$.
\end{cor}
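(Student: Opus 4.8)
The plan is to apply Theorem~\ref{thm:extsystem} and split into two cases according to whether or not every solution of the main equation~\eqref{eq:main} is parallel. Recall from Theorem~\ref{thm:extsystem} that, since $D(g,J)\geq 3$, there is a unique $B\in\R$ governing the extended system for all of $\Sol(g,J)$, \emph{unless} all solutions of~\eqref{eq:main} are parallel. These two alternatives will correspond precisely to the two conclusions of the corollary.

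First I would treat the case in which every $A\in\Sol(g,J)$ is parallel. Let $\gt$ be any K\"ahler metric c-projectively equivalent to $g$, and form the associated hermitian endomorphism $A=A(g,\gt)$ of~\eqref{eq:defA}; this is a (nondegenerate) element of $\Sol(g,J)$. By hypothesis $A$ is $\nabla$-parallel, so by Remark~\ref{rem:affine} (which records that $A(g,\gt)$ is parallel if and only if $\nablat=\nabla$, equivalently the corresponding $\Lam$ vanishes) the metrics $g$ and $\gt$ have the same Levi-Civita connection, i.e.\ $\gt$ is affinely equivalent to $g$. Since $\gt$ was arbitrary, this yields the second alternative of the corollary. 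Note that, unlike the generic bijection between solutions and c-projectively equivalent metrics, this direction is completely global: we start from the metric $\gt$ and produce its genuine global solution $A(g,\gt)$, so no ``generic'' or ``local'' caveat is needed.

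In the remaining case there is some non-parallel $A\in\Sol(g,J)$, equivalently one whose vector field $\Lam$ is not identically zero. By Theorem~\ref{thm:extsystem} there is then a constant $B\in\R$ and a function $\mu$ so that $(A,\Lam,\mu)$ solves the extended system~\eqref{eq:extsystem}, and since $\Lam\not\equiv0$, Definition~\ref{defn:CCB} shows that $(g,J)$ is $\CC(B)$. As $M$ is a closed connected $2m$-orbifold with $2m\geq 4$, Theorem~\ref{thm:orbifold} applies directly and gives that $(M,g,J)$ is an orbifold quotient of $\C P^m$ with a Fubini--Study metric, which is the first alternative. The two cases exhaust all possibilities, completing the proof.

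The content of this corollary rests entirely on Theorems~\ref{thm:extsystem} and~\ref{thm:orbifold}, so there is no serious analytic obstacle; the only point demanding care is the bookkeeping that matches each branch of the dichotomy to the correct conclusion and, in the parallel branch, the passage from ``$A(g,\gt)$ parallel'' to the global statement ``$\gt$ affinely equivalent to $g$'' for \emph{every} c-projectively equivalent $\gt$ simultaneously.
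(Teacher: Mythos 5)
Your proposal is correct and follows essentially the same route as the paper, which states that the corollary is immediate from Theorem~\ref{thm:extsystem} and Theorem~\ref{thm:orbifold}: the dichotomy of Theorem~\ref{thm:extsystem} (all solutions parallel versus the metric being $\CC(B)$) maps exactly onto the two alternatives, with Remark~\ref{rem:affine} handling the parallel branch and Theorem~\ref{thm:orbifold} the other.
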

\begin{rem} This corollary is immediate from Theorem~\ref{thm:orbifold} and
Theorem~\ref{thm:extsystem} (i.e.,~\cite[Section 2]{FKMR}): in the manifold
case, it is the main result of~\cite{FKMR}, where it was established for
metrics of arbitrary signature. Indeed, on manifolds, the analogue of
Theorem~\ref{thm:orbifold} for metrics of arbitrary signature was obtained
in~\cite[Remark 12]{FKMR}. Furthermore, the proof in~\cite{FKMR} proceeds by
first reducing to the case that $-B g$ is positive definite, and this part of
the argument extends straightforwardly to orbifolds. Hence
Theorem~\ref{thm:orbifold} is actually valid in all signatures. 

On the other hand, in the remaining case, where (without loss of generality)
$B=-1$ and $g$ is positive definite, \cite[Lemma 8]{FKMR} shows that the
extended system~\eqref{eq:extsystem} yields a nontrivial solution of the
k\"ahlerian \emph{Tanno equation}, and so the manifold case of
Theorem~\ref{thm:orbifold} follows from \cite[Theorem 10.1]{tanno}.  In fact,
as shown in \cite[\S 3, see (4)]{FR}, the Tanno equation is equivalent to the
extended system in this case, and so Theorem~\ref{thm:orbifold} may be
regarded as providing a natural generalization of \cite[Theorem 10.1]{tanno}
to orbifolds of arbitrary signature.  Note that our method of proof for
Theorem~\ref{thm:orbifold} is very different from~\cite{tanno}.

The corollary is a rigidity result for closed connected K\"ahler orbifolds
$(M,g,J)$ which are not quotients of $\C P^m$, but admit a c-projectively
equivalent metric which is not affine equivalent (i.e., a hamiltonian $2$-form
of order $\ell>0$). This has several consequences. First, as observed
in~\cite{FKMR}, the isometry group of $g$ has codimension $\leq 1$ in the
group of c-projective transformations of $M$: this is because the latter group
acts on the projectivization of $\Sol(M,g)$, with the isometry group of $g$ as
a point stabilizer. Secondly, since the hamiltonian $2$-form is essentially
unique (i.e., $A\in\Sol(M,g)$ is unique up to a linear combination with the
identity solution), the $\ell$-torus action it defines must be central.
\end{rem}

\section{Classification of metrics with c-projective mobility
$\geq 3$}
\label{sec:applications}

Let us recall the following result:
\begin{thm}[\cite{ACGT2,FKMR}]\label{thm:dim4}
Let $(M,g,J)$ be a connected K\"ahler manifold of real dimension $4$. Then
$D(g,J)\geq 3$ if and only if the holomorphic sectional curvature is constant.
\end{thm}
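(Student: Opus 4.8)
The plan is to prove both directions through the curvature‑nullity tensor $\Nu{B}$ of Definition~\ref{defn:nullity}, exploiting that constant holomorphic sectional curvature (CHSC) is precisely the vanishing $\Nu{B}\equiv0$ for a constant $B$. The easy direction ($\Leftarrow$) I would dispatch first. If $g$ has CHSC then $R=-4BK$ pointwise, and Schur's lemma (valid since $m=2\geq2$) forces $B$ constant, so $\Nu{B}\equiv0$. I would then observe that the extended system~\eqref{eq:extsystem} is a linear system of finite type whose integrability is governed by $\Nu{B}$ through equations~\eqref{eq:second} and~\eqref{eq:third} of Proposition~\ref{prop:key-identities}: when $\Nu{B}\equiv0$ these hold identically, so the associated connection $\cD$ from~\eqref{eq:tractor} is flat. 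Its parallel sections $(A,\Lam,\mu)$ then form a space of real dimension equal to the rank $(m+1)^2=9$ of the bundle of hermitian forms; since the first line of~\eqref{eq:extsystem} is exactly~\eqref{eq:main}, each such $A$ lies in $\Sol(g,J)$, and the assignment $(A,\Lam,\mu)\mapsto A$ is injective (if $A\equiv0$ then~\eqref{eq:main} forces $\Lam=0$, whence $\mu=0$). Hence $D(g,J)\geq9\geq3$.

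For the substantive direction ($\Rightarrow$) my main tool is a Lie‑algebraic fact special to real dimension $4$. By Theorem~\ref{thm:extsystem} there is a constant $B$ for which every $A\in\Sol(g,J)$ satisfies~\eqref{eq:extsystem} (in the degenerate case where all solutions are parallel I would instead take $B=0$, so that $[R(X,Y),A]=0$ directly). By Theorem~\ref{thm:equivalence}(1) the operator $\Nu{B}(X,Y)$ commutes with every $A\in\Sol(g,J)$; since $\Nu{B}(X,Y)\in\mathfrak{u}(2)$ (both $R$ and $K$ commute with $J$ and are skew), it commutes with all tracefree parts, which form a subspace $V$ of the tracefree hermitian endomorphisms $i\,\mathfrak{su}(2)\cong\R^3$. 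Because $D(g,J)\geq3$ and $\Id$ spans the one‑dimensional kernel of $A\mapsto A-\tfrac12(\mathrm{tr}_\C A)\Id$, we get $\dim V\geq2$. Here the key: every abelian subalgebra of $\mathfrak{su}(2)$ is at most one‑dimensional, so a two‑dimensional $V$ must contain two non‑commuting elements, whose common centralizer in $\mathfrak{u}(2)$ is the centre $\R J$. Thus $\Nu{B}(X,Y)\in\R J$ for all $X,Y$; writing $g(\Nu{B}(X,Y)Z,W)=f(X,Y)\,g(JZ,W)$, the pair symmetry of the Kähler curvature tensor $\Nu{B}$ forces $f$ proportional to $\omega$, and the first Bianchi identity then forces $f=0$. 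Hence $\Nu{B}\equiv0$, i.e.\ $g$ has CHSC with constant holomorphic sectional curvature $-4B$.

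The point where this argument degenerates, and which I expect to be the main obstacle, is when $\dim V_p\leq1$ on an open set: then all solutions are simultaneously diagonal with respect to a fixed $J$‑invariant splitting $T M=L\oplus L^{\perp}$, i.e.\ $g$ is locally a Kähler product of two Riemann surfaces, and the centralizer computation no longer isolates $\R J$. I would handle this case through the classification of \S\ref{subsec:localclass}. Since $\Lam\in N(B)_p$ always and $\Lam$ spans $L$, we have $L\subseteq N(B)_p$. If some solution has a nonconstant eigenvalue along $L^{\perp}$ as well (an orthotoric solution, $\ell=2$), then by Theorem~\ref{thm:equivalence}(6) its gradient lies in $L^{\perp}\cap N(B)_p$, giving $N(B)_p=T_pM$ and hence CHSC by Corollary~\ref{cor:degree0}. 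Otherwise every solution is constant along $L^{\perp}$ and nonconstant only along $L$, so $D(g,J)\geq3$ supplies at least two independent such solutions; feeding the explicit $\ell=1$ warped form into the computation used in the proof of Lemma~\ref{low-rank} (where $R_0=\mathrm{const}\cdot K_0$ emerges) pins the base Riemann surface to constant curvature $-\Theta'(\eta)$, and \S\ref{subsec:localclass}(1) then upgrades this to CHSC for $g$. Converting the extra mobility into this base‑curvature information is exactly the delicate step, since it is here that the product geometry must be excluded unless it already has constant holomorphic sectional curvature.
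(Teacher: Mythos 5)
The paper itself does not prove Theorem~\ref{thm:dim4}: it quotes the hard direction from \cite{ACGT2,FKMR} (see the remark that follows the statement) and calls the converse standard. Your reconstruction is therefore an independent argument, and most of it is sound. The ($\Leftarrow$) direction via flatness of the prolongation connection is the standard one (with the usual caveat that flatness yields the full $(m+1)^2$-dimensional solution space only locally or under simple connectedness---the same caveat implicit in the paper's citation). The centralizer argument for ($\Rightarrow$) is correct and attractive: two linearly independent tracefree hermitian endomorphisms of $\C^2$ never commute, their common centralizer in $\mathfrak{u}(2)$ is $\R J$, and the pair symmetry together with the first Bianchi identity of $\Nu{B}$ kills any component along $J$. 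This cleanly gives $\Nu{B}=0$ on the open set where the pointwise tracefree span $V_p$ of $\Sol(g,J)$ is at least two-dimensional, and your treatment of the all-solutions-parallel case fits the same mould.

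The genuine gap is the degenerate case $\dim V_p\leq 1$ on an open set, which you correctly identify as the crux but do not close. Your proposed mechanism---feeding the explicit $\ell=1$ warped form into the computation of Lemma~\ref{low-rank}---does not apply: that computation derives $\mathrmsl{Ric}_0(Z)=2(m-1)(B\xi-\mu)Z$ from the hypothesis that a vector $Z$ in the \emph{zero} eigenspace $L^\perp$ lies in the $B$-nullity, whereas in the degenerate case the only nullity vectors at your disposal are $\Lam, J\Lam\in L$; there is nothing to feed in. Moreover the conclusion you aim for (two independent non-orthotoric solutions pinning down the base curvature) is not what actually happens: the configuration is vacuous. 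Writing $\pi_L$ for the orthogonal projection onto $L$ and normalizing the constant eigenvalues to zero, suppose $A_1=\xi_1\pi_L$ and $A_2=\xi_2\pi_L$ both solve \eqref{eq:main}. Applying \eqref{eq:main} with $X=Y\in L^\perp$ and using $\Lam_i=\tfrac12\grad_g\xi_i\in L$ gives
\begin{equation*}
(\nabla_Y A_i)Y=-\xi_i\,\pi_L(\nabla_Y Y)=g(Y,Y)\,\Lam_i ,
\end{equation*}
whence $\Lam_1/\xi_1=\Lam_2/\xi_2$, so $\xi_2=c\,\xi_1$ and $A_2\in\mathrm{span}\{A_1,\Id\}$, i.e.\ $D(g,J)=2$; a similar evaluation excludes a coexisting nontrivial parallel solution (a parallel $L^\perp$ would force $(\nabla_YA_1)Y=0\neq g(Y,Y)\Lam_1$). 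So the correct statement is that the degenerate non-orthotoric case is incompatible with $D(g,J)\geq 3$---which is exactly the lemma your sketch needs and does not supply. With that lemma inserted your proof closes; as written, the key case is asserted rather than proved.
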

\begin{rem}
In \cite[Proposition 10]{ACGT2} and \cite[Lemma 7]{FKMR}, it was shown that a
K\"ahler manifold of real dimension $4$ and of mobility $\geq 3$ has constant
holomorphic sectional curvature. The fact that every CHSC K\"ahler manifold of
any dimension $2m$ has mobility $(m+1)^2\geq 3$ is a standard result, see for
example~\cite{ACG1,Mikes}.
\end{rem}

By Theorem \ref{thm:extsystem}, the condition $D(g,J)\geq 3$ implies either
that all $A\in\Sol(M,g)$ are parallel, or that the metric is $\CC(B)$, i.e.,
the equivalent conditions of Theorem~\ref{thm:equivalence} hold.  Conversely,
we now find the metrics satisfying $D(g,J)\geq 3$ among those that are
$\CC(B)$.
\begin{thm}\label{thm:Dgeq3}
Let $(M,g,J)$ be a connected K\"ahler manifold of real dimension $2m\geq 4$
which is $\CC(B)$. Suppose in addition that there exists $A\in\Sol(g,J)$ such
that
\begin{itemize}
\item either the number of nonconstant eigenvalues of $A$ is $\geq 2$
\item or the number of constant eigenvalues of $A$ is $\geq 3$.
\end{itemize}
Then $D(g,J)\geq 3$.
\end{thm}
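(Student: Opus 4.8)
The plan is to produce three linearly independent elements of $\Sol(g,J)$ by analysing the $\cD$-parallel decomposition of the bundle $\Hom(TM,\C)\oplus M\times\C$ afforded by the given solution. Recall that, by the lemma preceding Theorem~\ref{thm:condforCCB}, solutions $(A',\Lam',\mu')$ of the extended system~\eqref{eq:extsystem} for the fixed constant $B$ correspond bijectively to $\cD$-parallel hermitian forms on this bundle, and hence to elements of $\Sol(g,J)$; moreover the relative eigenspaces of the forms defined by $(A,\Lam,\mu)$ and $(\Id,0,-B)$---the kernels of $(A-t\,\Id,\Lam,\mu+Bt)$ at the roots $t$ of the invariant polynomial $F(t)$ of~\eqref{eq:inv-poly}---are $\cD$-parallel subbundles. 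Since $g$ is $\CC(B)$, Theorem~\ref{thm:condforCCB} gives $\Theta_j(t)=\Theta(t)$ of degree $\leq\ell+1$ with $\Theta(\eta)=0$ at every constant eigenvalue $\eta$, so that $F(t)=p_c(t)\Theta(t)$. For each distinct root $r$ of $F$ the projection onto the corresponding relative eigenspace, composed with the $\cD$-parallel hermitian form defined by $(\Id,0,-B)$, is again a $\cD$-parallel hermitian form, hence a solution of~\eqref{eq:extsystem}; distinct roots give linearly independent solutions. Thus $D(g,J)$ is at least the number of distinct roots of $\Theta$.

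In the second case, where $A$ has at least three distinct constant eigenvalues $\eta_1,\eta_2,\eta_3$, this already finishes the proof: each $\eta_a$ is a root of $\Theta$ by Theorem~\ref{thm:condforCCB}, hence a root of $F$, so $F$ has at least three distinct roots and the three eigenprojections above are three independent elements of $\Sol(g,J)$, giving $D(g,J)\geq 3$.

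The first case, $\ell\geq 2$, is more delicate, because the root count alone can fail: already for a purely orthotoric metric with $\Theta(t)=-4B(t-r)^{\ell+1}$ the polynomial $F$ has a single root, yet Corollary~\ref{cor:degree0} forces constant holomorphic sectional curvature and mobility $(m+1)^2$. The extra solutions must therefore come from flatness of $\cD$ in the fibre directions rather than from distinct eigenvalues. By Corollary~\ref{cor:CHSC-fibre} the totally geodesic orthotoric fibres have CHSC $-4B$, so $\Nu{B}$ vanishes on every vertical vector. I would use this, together with the Gray--O'Neill formula~\eqref{eq:GrayONeill} (which controls how the vertical part of the curvature depends on the fibration), to show that the vertical part of $\Hom(TM,\C)$ together with the line $M\times\C$ spans a $\cD$-parallel subbundle of rank $\ell+1\geq 3$ on which $\cD$ is flat. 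Since every hermitian form on a flat $\cD$-parallel subbundle is $\cD$-parallel, this yields an $(\ell+1)^2$-dimensional space of solutions of~\eqref{eq:extsystem}, whence $D(g,J)\geq(\ell+1)^2\geq 9$. For $B<0$ the same conclusion reads: the vertical and radial directions assemble into a flat complex-Euclidean de Rham factor $\C^{\ell+1}$ of the cone $\hat M$ of Appendix~\ref{sec:cone}, whose parallel hermitian endomorphisms account for the claimed mobility.

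The main obstacle is precisely this first case: establishing that the CHSC fibre structure yields not merely a flat subbundle but a $\cD$-parallel flat \emph{summand} of $\Hom(TM,\C)\oplus M\times\C$, so that the fibre solutions genuinely extend to global solutions on $M$. This requires controlling the mixing between fibre and base directions produced by the nontrivial orthotoric fibration---using total geodesy of the fibres and the vanishing of $\Nu{B}$ on vertical vectors---and handling the interaction when an orthotoric relative eigenvalue coincides with a constant eigenvalue, so that a root of $\Theta$ has multiplicity exceeding its multiplicity in $p_c$. A secondary point is uniformity in the sign of $B$: the cone picture is cleanest for $B<0$, so for $B\geq 0$ I would run the argument intrinsically through Corollary~\ref{cor:CHSC-fibre} and the connection~\eqref{eq:tractor}, for which the flatness computation is insensitive to the sign of $B$.
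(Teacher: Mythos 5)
Your strategy---producing solutions of~\eqref{eq:extsystem} from $\cD$-parallel hermitian forms on $\Hom(TM,\C)\oplus M\times\C$ and its relative eigenspace decomposition---is genuinely different from the paper's main proof, which instead writes down a second solution explicitly ($\tilde A=\Lam^\flat\otimes\Lam+J\Lam^\flat\otimes J\Lam$ for $B=0$, $\tilde A=A^2+\Lam^\flat\otimes\Lam+J\Lam^\flat\otimes J\Lam$ after rescaling to $B=-1$, and $\tilde A=A^2$ when $A$ is parallel) and derives a contradiction with the eigenvalue hypotheses from $D(g,J)=2$. Your route is essentially the one the paper runs in Appendix~\ref{subsec:Dgeq3_rederived} for $\CC(-1)$ metrics, where the count $D=f^2+i$ over the de~Rham factors of the cone does the work. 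However, as written your argument has genuine gaps. First, your treatment of the case $\ell\geq 2$ is only a plan, and its central claim is false in general: the vertical directions together with the line $M\times\C$ do \emph{not} span a $\cD$-parallel subbundle once $A$ has constant eigenvalues. The $\cD$-parallel summands are the relative eigenspaces; the summand attached to a root $\eta$ of $F$ of multiplicity $m_\eta+1$ corresponds (for $B=-1$) to the tangent bundle of a nonflat cone factor $\hat M_I$, and the span of its cone direction $\cC_I$ alone is not parallel, since $\hat\nabla\cC_I=\Id$ on all of $T\hat M_I$. So the ``flat summand of rank $\ell+1$'' you want simply is not there when constant eigenvalues occur, which is permitted under the first hypothesis. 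What does survive is the weaker count: by Lemma~\ref{lem:eigvalrel} (for $B<0$) the relative eigenvalues are $\ell+1$ \emph{distinct} numbers $C_0<\cdots<C_\ell$ interlacing the $\xi_i$, so there are at least $\ell+1\geq 3$ parallel summands and hence at least three independent solutions---no flatness is needed. But establishing that distinctness (your worry about $\Theta(t)=-4B(t-r)^{\ell+1}$ is legitimate a priori) is precisely the content you would have to supply, and you have not.

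Second, the case $B=0$ falls outside your framework entirely: the reference form defined by $(\Id,0,-B)=(\Id,0,0)$ is degenerate, so ``relative eigenvalues,'' the orthogonality of their eigenspaces, and the eigenprojections you compose with the reference form are not defined. This is not a removable technicality, because $B=0$ is exactly the situation in which the chosen $A$ may be parallel with several distinct constant eigenvalues (for $B\neq 0$, equation~\eqref{eq:second} and Lemma~\ref{lem:Kbracket} force any parallel solution to be a multiple of $\Id$), and a parallel $A$ does not fit the normal form~\eqref{eq:locclass} on which Theorem~\ref{thm:condforCCB} and your identification $F=p_c\Theta$ rely. Your second case therefore also needs a separate argument when $B=0$ or when $A$ is parallel; the paper disposes of both with the elementary observations that $A^2$ is again a parallel solution, respectively that $\Lam^\flat\otimes\Lam+J\Lam^\flat\otimes J\Lam\in\Sol(g,J)$. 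In summary: the second case is essentially correct for $B\neq 0$ and non-parallel $A$, but the first case and the degenerate cases $B=0$ and $\Lam\equiv 0$ are not established.
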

\begin{proof}
Let us choose $A\in\Sol(g,J)$ satisfying one of the two conditions on the
eigenvalues.

First suppose that the corresponding vector field $\Lam$ is identically
zero.  Then $A$ is covariantly constant and all eigenvalues of $A$ are
constant (see Remark \ref{rem:affine}).  The endomorphism $\tilde{A}=A^2$ is
covariantly constant and hence contained in $\Sol(g,J)$. It follows that
$\tilde{A},A$ and $\Id$ are linearly independent and therefore $D(g,J)\geq 3$,
since otherwise, $A$ would be annihilated by a polynomial with constant
coefficients of order two or lower and this contradicts the assumption that
the number of constant eigenvalues is at least three. We have proven Theorem
\ref{thm:Dgeq3} under the assumption $\Lam\equiv 0$.

Let us now suppose that $\Lam$ is not identically zero. 

\emph{First case: $B=0$.} A straightforward computation (using the equations
in \eqref{eq:extsystem}) shows
\begin{equation*}
\tilde{A}=\Lam^\flat\otimes\Lam+J\Lam^\flat \otimes J\Lam
\end{equation*}
is contained in $\Sol(g,J)$, where the corresponding vector field is
$\tilde{\Lam}=\mu\Lam$ and $\mu$ is a constant.

Clearly, $\tilde A$ is not proportional to $\Id$ (since it is multiplication
with $g(\Lam,\Lam)$ on $\mathrm{span}\{\Lam,J\Lam\}$ and multiplication with
$0$ on $\mathrm{span}\{\Lam,J\Lam\}^\perp$).  If $D(g,J)=2$, we have $A=\alpha
\tilde A+\beta\Id$ for certain constants $\alpha$ and $\beta$ but this
contradicts the assumptions on the eigenvalues of $A$.  Theorem
\ref{thm:Dgeq3} is proven in the case $B=0$.  \medbreak \emph{Second case:
  $B\neq 0$.} Let us multiply the metric with $-B$, such that the system
\eqref{eq:extsystem} for the new metric (which we again denote by the symbol
$g$) holds with $B=-1$. Note that the mobility remains unchanged by this
procedure. A straightforward computation (one may also compare
\cite[p. 1338]{Mikes}, \cite[equation (88) in the proof of Lemma 10]{FKMR} or
the cone construction \cite[Theorem 9]{MatRos2}---see the appendix below)
using the equations in \eqref{eq:extsystem} shows
\begin{equation*}
\tilde{A}=A^2+\Lam^\flat\otimes\Lam+J\Lam^\flat\otimes J\Lam
\end{equation*}
is contained in $\Sol(g,J)$ with corresponding vector field
$\tilde{\Lam}=(A+\mu\Id)\Lam$. Assuming $D(g,J)=2$, we obtain (up to
rescaling) $A=\tilde A+\alpha\Id$ for a certain constant $\alpha$. Taking the
covariant derivative of this equation shows
$\Lam=(A+\mu\Id)\Lam$. Hence, $\Lam$ is an eigenvector of $A$
corresponding to the nonconstant eigenvalue $1-\mu$. Equation
\eqref{eq:lambda1} (together with the fact that for each nonconstant
eigenvalue $\xi_i$ of $A$, $\grad_g\xi_i$ is contained in the corresponding
eigenspace) implies that $A$ has exactly one nonconstant eigenvalue.
Restricting $A=\tilde A+\alpha\Id$ to the orthogonal complement
$U:=\mathrm{span}\{\Lam,J\Lam\}^{\perp}$ shows that the restriction
$A|_{U}$ is annihilated by a quadratic polynomial. Then the number of
nonconstant eigenvalues is at most two.  We obtain a contradiction to any of
the two conditions on the eigenvalues of $A$. Hence, $D(g,J)\geq 3$ and
Theorem \ref{thm:Dgeq3} is proven.
\end{proof}

\appendix
\section{Cone construction for $\CC(-1)$ metrics}
\label{sec:cone}

\subsection{The cone construction}
\label{subsec:cone}

If $g$ is a $\CC(-1)$ K\"ahler metric then the space $\Sol(g,J)$ is isomorphic
to the space of solutions $(A,\Lam,\mu)$ of the PDE system
\begin{equation}\label{eq:B=-1system}\begin{split}
\nabla_X A&=X^\flat\otimes \Lam+\Lam^\flat\otimes X+JX^\flat\otimes J\Lam
+J\Lam^\flat\otimes JX,\\
\nabla\Lam&=\mu \Id-A,\\
\nabla \mu&=-2\Lam^\flat.
\end{split}\end{equation}
The cone construction \cite[Theorem 9]{MatRos2} (see also the formulae in
\cite[pp.~1338--1339]{Mikes} for the same statement, though the formula for
$\hat A$ appearing there seems to have a misprint) asserts that the space of
solutions $(A,\Lam,\mu)$ of this system is isomorphic to the space of parallel
hermitian endomorphisms $\hat A\in \mathrm{End}(T\hat M)$ on the cone
\begin{align}\label{eq:cone}
\hat M=\R_{>0}\times\R\times M,\,\,\,\hat g=\d r^2+r^2(\phi^2+g),\,\,\,
\hat J=\frac{1}{r}\partial_t\otimes \d r-r\partial_r\otimes \phi+J,
\end{align}
where $\phi=\d t-\tau$ and $\tau $ is a $1$-form on $M$ satisfying $\d
\tau=2\omega$ ($\omega=g(J\cdot,\cdot)$ denotes the K\"ahler form on $M$). The
construction is local but this is sufficient for our purposes. The
correspondence between solutions $(A,\Lam,\mu)$ of \eqref{eq:B=-1system}
and parallel hermitian endomorphisms $\hat A\in \mathrm{End}(T\hat M)$ is
given by
\begin{align}\label{eq:hatA}
\hat g(\hat{A}\cdot,\cdot)=\mu\d r^2-r\d r\odot\Lam^\flat
+r^2(\mu\phi^2+\phi\odot\Lam^\flat(J\cdot)+g(A\cdot,\cdot)).
\end{align}

Further, we view the manifold $N=\R\times M$ with metric $h=\phi^2+g$ as
naturally embedded into $\hat M$ as the hypersurface $N=\{r=1\}$. The manifold
$(M,g,J)$ is recovered from $(\hat M,\hat g,\hat J)$ as the K\"ahler quotient
w.r.t.\ the action of the hamiltonian Killing vector field $K:=\frac{1}{2}\hat
J\grad_{\hat g}r^2$ on the level set $N$, where the function
$\frac{1}{2}r^2$ serves as the moment map for the (local) hamiltonian
$S^1$-action induced by $K$.

\subsection{The K\"ahler quotient in the presence of a decomposition
of the cone into a direct product} \label{subsec:quotient}

By the decomposition theorem for riemannian manifolds \cite{ei}, the
parallel hermitian endomorphisms on a manifold are classified by all the ways
the manifold can be decomposed into a direct product of K\"ahler manifolds.
Let $(\hat M,\hat g,\hat J)$ be the cone over a K\"ahler manifold $(M,g,J)$
given by~\eqref{eq:cone}. Suppose $\hat g$ decomposes into a direct product
\begin{align}\label{eq:directprod}
M=\prod_i M_i,\,\,\,\hat g=\sum_i \hat g_i,\,\,\,\hat J=\sum_i \hat J_i.
\end{align}
of K\"ahler manifolds $(\hat M_i,\hat g_i,\hat J_i)$. Recall that the cone
structure on $(\hat M,\hat g,\hat J)$ gives rise to the cone vector
field $\cC=r\partial_r$ satisfying
$\hat\nabla\cC=\Id$. Conversely, a vector field satisfying this
equation induces a cone structure by defining the radial coordinate to be
\begin{equation*}
r:=\sqrt{\hat g(\cC,\cC)}.
\end{equation*}
The decomposition $\cC=\sum_{i=0}^\ell \cC_i$ of the cone
vector field w.r.t.~\eqref{eq:directprod} defines cone vector fields
$\cC_i$ on each component $(\hat M_i,\hat g_i,\hat J_i)$ making them
into cones over certain K\"ahler manifolds $(M_i,g_i,J_i)$. Hence, having a
decomposition as in \eqref{eq:directprod}, we may write
\begin{align}\label{eq:eigdecomphatg}
\hat g=\sum_{i=0}^\ell \underbrace{(\d r_i^2+r_i^2(\phi_i^2+g_i)}_{=\hat g_i}.
\end{align}
Here we allow some of the $g_i$'s to be zero, meaning that the corresponding
cone $(\hat M_i,\hat g_i,\hat J_i)$ is (complex) $1$-dimensional over a base
of dimension $0$. In particular $\hat g_i$ is flat.

As a K\"ahler riemannian cone, $\hat g$ is of the form \eqref{eq:cone}. Using
$\cC=\sum_{i=0}^\ell \cC_i$ and
$\cC_i=r_i\partial_{r_i}$, we see that
\[
r,K=\frac{1}{2}\hat J\grad_{\hat g}r^2,\partial_r,\d r\mbox{ and }\phi
\]
relate to the corresponding objects on the components $\hat g_i$ of $\hat g$ in
\eqref{eq:eigdecomphatg} by the equations
\begin{align}\label{eq:rel1}
r^2=\sum_{i}  r_i^2,\,\,\,K=\sum_i  K_i,\,\,\,
\partial_r=\frac{1}{r}\sum_i  r_i\partial_{ r_i},\,\,\,
\d r=\frac{1}{r}\sum_i r_i\d r_i\mbox{ and }
\phi=\frac{1}{r^2}\sum_{i} r_i^2\phi_i.
\end{align}

Next we describe the K\"ahler quotient of the direct product metric $\hat g$
in \eqref{eq:eigdecomphatg} w.r.t.\ the action of the hamiltonian Killing
vector field $K:=\frac{1}{2}\hat J\grad_{\hat g}r^2$ on the level set $r=1$.

\begin{thm}\label{thm:quotient}
The K\"ahler quotient metric $g$ of the metric $\hat g$ is given by the formula
\begin{align}\label{eq:quotient}
g=\sum_{i=0}^\ell \d r_i^2+\frac{1}{2}\sum_{i,j=0}^\ell r_i^2
r_j^2(\phi_i-\phi_j)^2+\sum_{i=0}^\ell r_i^2 g_i.
\end{align}
\end{thm}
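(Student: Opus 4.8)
The plan is to avoid a vector-by-vector description of the K\"ahler reduction and instead exploit the structure already recorded in \S\ref{subsec:cone}: the restriction of $\hat g$ to the level set $N=\{r=1\}$ is precisely $h=\phi^2+g$, where $g$ is (the pullback of) the quotient metric and $\phi$ is the connection $1$-form. Since the hamiltonian Killing field $K$ satisfies $K^\flat=r^2\phi$ and hence has unit length on $N$, the quotient metric is obtained simply by subtracting the square of the connection form, $g=h-\phi^2=\hat g|_{r=1}-\phi^2$. Everything then reduces to restricting $\hat g$ to $N$ and identifying $\phi$ there.

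First I would restrict the product expression \eqref{eq:eigdecomphatg} to $N$. Because the cones $(\hat M_i,\hat g_i,\hat J_i)$ are mutually $\hat g$-orthogonal, this gives $\hat g|_{r=1}=\sum_i\d r_i^2+\sum_i r_i^2\phi_i^2+\sum_i r_i^2 g_i$, with the $\d r_i$ now subject to the single relation $\sum_i r_i\,\d r_i=0$ coming from $\sum_i r_i^2=1$. Here the terms $\sum_i\d r_i^2$ and $\sum_i r_i^2 g_i$ are tangent to $N$ and, since $K_i^\flat=r_i^2\phi_i$, are $\hat g$-orthogonal to $K=\sum_i K_i$; hence they are already basic and pass to the quotient unchanged, yielding the first and third terms of \eqref{eq:quotient}, while only the angular part $\sum_i r_i^2\phi_i^2$ is affected by the reduction. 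Next, by \eqref{eq:rel1} we have $\phi=\tfrac1{r^2}\sum_i r_i^2\phi_i$, so on $N$ (where $r^2=1$) $\phi=\sum_i r_i^2\phi_i$; subtracting $\phi^2=\bigl(\sum_i r_i^2\phi_i\bigr)^2$ from the angular part, together with the elementary identity $\sum_i r_i^2\phi_i^2-\bigl(\sum_i r_i^2\phi_i\bigr)^2=\tfrac12\sum_{i,j}r_i^2 r_j^2(\phi_i-\phi_j)^2$ (valid because $\sum_i r_i^2=1$), produces exactly the middle term. Assembling the three pieces gives \eqref{eq:quotient}.

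The individual steps are short, so the point I regard as the crux is the identification $g=\hat g|_{r=1}-\phi^2$ with $\phi=\sum_i r_i^2\phi_i$ on $N$. One must check that $\hat g|_{r=1}-\phi^2$ is genuinely basic (it annihilates $K$, since $K^\flat=r^2\phi$ forces $(\hat g|_{r=1}-\phi^2)(K,X)=\phi(X)-\phi(X)=0$ for all $X$ on $N$), so that it descends to the quotient; and that the radial and base directions really are $K$-horizontal, the latter using that the base metrics $g_i$ in \eqref{eq:eigdecomphatg} are built from the horizontal distributions of the connections $\phi_i$ rather than from coordinate directions. Once these orthogonality and basicness facts are in place, no curvature or Levi-Civita computation is needed and the remainder is purely the algebraic identity above. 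As an alternative, one could instead characterize the quotient tangent space as $\{X:\d r(X)=0,\ \hat g(K,X)=0\}$ and evaluate $\hat g(X,X)$ under the two constraints $\sum_i r_i\,\d r_i(X)=0$ and $\sum_i r_i^2\phi_i(X)=0$; this leads to the same identity but is more cumbersome.
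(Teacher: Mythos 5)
Your argument is correct and coincides with the paper's own proof: restrict $\hat g$ to the level set $r=1$, write the quotient metric as $\hat g|_{r=1}-\phi^2$, substitute $\phi=\sum_i r_i^2\phi_i$ from \eqref{eq:rel1}, and apply the algebraic identity $\sum_i r_i^2\phi_i^2-\bigl(\sum_i r_i^2\phi_i\bigr)^2=\tfrac12\sum_{i,j}r_i^2r_j^2(\phi_i-\phi_j)^2$ using $\sum_i r_i^2=1$. The additional checks you include (basicness of the radial and base terms, $K^\flat=r^2\phi$) are correct and merely make explicit what the paper leaves implicit.
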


\begin{rem}
The forms $\phi_i-\phi_j$ are basic, i.e., they can be written as the pullback
of forms defined on the quotient. Indeed, these forms vanish upon insertion of
$\partial_r$ and $K$, they do not depend on $r$ and they are $K$-invariant
(that is, invariant w.r.t.\ the (local) $S^1$-action).
\end{rem}

\begin{rem}
Recall that the metrics $g_i$ in \eqref{eq:quotient} are zero if $\hat g_i=\d
r_i^2+r_i^2(\phi_i^2+g_i)$ is (complex) $1$-dimensional.
\end{rem}

\begin{proof}[Proof of Theorem \ref{thm:quotient}]
Restricted to the level set $r=1$, the quotient metric $g$ is given by 
\begin{align}\label{eq:groughly}
g=\hat g-\phi^2=\sum_{i=0}^\ell \d r_i^2
+\sum_{i=0}^\ell r_i^2\phi_i^2-\phi^2+\sum_{i=0}^\ell r_i^2 g_i.
\end{align}
Using \eqref{eq:rel1}, we obtain
\begin{equation*}
\sum_{i=0}^\ell r_i^2\phi_i^2-\phi^2
=\sum_{i=0}^\ell r_i^2\phi_i^2-\sum_{i,j=0}^\ell r^2_i r_j^2\phi_i\otimes  \phi_j
=\frac{1}{2}\sum_{i,j=0}^\ell r_i^2 r_j^2(\phi_i-\phi_j)^2
\end{equation*}
which gives us formula \eqref{eq:quotient}.
\end{proof}

In what follows, let $\hat A$ be a parallel hermitian endomorphism for $\hat
g$ with distinct eigenvalues $C_0<\cdots<C_\ell$ of multiplicities $m_0,\ldots
m_\ell$. Let \eqref{eq:eigdecomphatg} be the decomposition of $\hat g$
w.r.t.\ the parallel eigenspace distributions of $\hat A$. If we consider
$\hat A$ as a parallel symmetric $(0,2)$-tensor field (by lowering one index
w.r.t.\ the metric $\hat g$), it is given by the formula
\begin{align}\label{eq:DecompA}
\hat A=\sum_{I=0}^\ell C_I(\d r_I^2+r_I^2(\phi_I^2+h_I)).
\end{align}

Let us relate the (constant) eigenvalues $C_0,\ldots C_\ell$ of $\hat A$ to the
(generically nonconstant) eigenvalues $\xi_1,\ldots \xi_\ell$ of $A\in
\Sol(g,J)$ corresponding to $\hat A$. 
\begin{lem}\label{lem:eigvalrel}
Let $\hat A$ be given by \eqref{eq:DecompA} for numbers $C_0<\cdots<C_\ell$ and
let the cone metric $\hat g$ over $g$ be given by
\eqref{eq:eigdecomphatg}. Let $A\in \Sol(g,J)$ correspond to $\hat A$ via the
isomorphism \eqref{eq:hatA}. Then the function $p_A\colon\hat M\times
\R\rightarrow \R$, given by
\begin{align}\label{eq:eigvalrel}
p_A(t) = \frac{1}{r^2}\prod_{i=0}^\ell(t-C_i)^{m_i-1}
\sum_{i=0}^\ell r_i^2\prod_{j\neq i}(t-C_j)
\end{align}
is the characteristic polynomial of $A$. Moreover, we have
\begin{align}\label{eq:ordering}
C_0\leq \xi_1\leq  C_1\leq\cdots\leq \xi_\ell\leq  C_\ell.
\end{align}
where $\xi_i$ are the ordered nonconstant eigenvalues of $A$. In particular,
$\ell$ is the number of nonconstant eigenvalues of $A\in \Sol(g,J)$ on the base
$M$ and the
eigenvalues of $\hat A$ occurring with multiplicity two or higher are the
constant eigenvalues of $A$.
\end{lem}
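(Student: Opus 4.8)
The plan is to recognise $A$, under the canonical identification of $T_pM$ with the quotient's horizontal space, as the \emph{compression} of the parallel endomorphism $\hat A$ to a complex hyperplane of $T\hat M$, and then to extract its characteristic polynomial from a standard resolvent identity; the interlacing~\eqref{eq:ordering} would then drop out of a sign count. I work along $N=\{r=1\}$, where the quotient metric is $g=\hat g-\phi^2$ by~\eqref{eq:groughly}. The cone field $\cC=r\partial_r$ and the Killing field $K=\hat J\cC$ span a $\hat J$-invariant complex line $L:=\mathrm{span}_\R\{\cC,K\}$, and I set $\mathcal H:=L^{\perp}$, its $\hat g$-orthogonal complement, which is again $\hat J$-invariant. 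On $N$ the horizontal lift identifies $\mathcal H$ isometrically with $T_pM$ and intertwines $\hat J|_{\mathcal H}$ with $J$. Restricting~\eqref{eq:hatA} to $\mathcal H\times\mathcal H$, every term containing $\d r$ or $\phi$ vanishes (both forms annihilate $\mathcal H$), leaving $\hat g(\hat AX,Y)=g(AX,Y)$ for $X,Y\in\mathcal H$ at $r=1$; since $\hat g|_{\mathcal H}=g$ this says precisely that $A=P_{\mathcal H}\,\hat A|_{\mathcal H}$ is the compression of the hermitian endomorphism $\hat A$ to $\mathcal H$.

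Next I would apply the compression identity for characteristic polynomials. Writing $\langle\cdot,\cdot\rangle$ for the hermitian inner product determined by $(\hat g,\hat J)$ (so $\langle v,v\rangle=\hat g(v,v)$ on real vectors) and taking the unit vector $u=\partial_r$ spanning $L$ over $\C$, deletion of the $u$-row and column of $t\,\Id-\hat A$ returns $t\,\Id-A$, whence
\begin{equation*}
p_A(t)=\det\nolimits_\C(t\,\Id-A)=p_{\hat A}(t)\,\langle u,(t\,\Id-\hat A)^{-1}u\rangle=p_{\hat A}(t)\sum_{i=0}^\ell\frac{\lVert P_{E_i}u\rVert^2}{t-C_i},
\end{equation*}
where $p_{\hat A}(t)=\prod_i(t-C_i)^{m_i}$, $E_i=T\hat M_i$ is the $C_i$-eigenspace, and the last equality uses the spectral resolution $(t\,\Id-\hat A)^{-1}=\sum_i(t-C_i)^{-1}P_{E_i}$. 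By~\eqref{eq:rel1} one has $\partial_r=\tfrac1r\sum_i r_i\,\partial_{r_i}$ with $\partial_{r_i}\in E_i$ a unit vector; as the $E_i$ are mutually orthogonal, $P_{E_i}\partial_r=\tfrac{r_i}{r}\partial_{r_i}$ and $\lVert P_{E_i}\partial_r\rVert^2=r_i^2/r^2$. Substituting, and splitting $p_{\hat A}(t)=\prod_i(t-C_i)^{m_i-1}\prod_i(t-C_i)$, yields exactly~\eqref{eq:eigvalrel}; the right-hand side depends only on the scale-invariant ratios $r_i^2/r^2$, so it descends to $M$.

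The ordering and counting would then follow by elementary analysis of the degree-$\ell$ polynomial $Q(t):=\sum_{i=0}^\ell r_i^2\prod_{j\neq i}(t-C_j)$, which by~\eqref{eq:eigvalrel} satisfies $r^2p_A(t)=\prod_i(t-C_i)^{m_i-1}Q(t)$. Evaluating at $t=C_k$ leaves only the $i=k$ term, $Q(C_k)=r_k^2\prod_{j\neq k}(C_k-C_j)$, whose sign is $(-1)^{\ell-k}$ wherever $r_k\neq0$; as consecutive signs alternate, $Q$ has a root $\xi_k$ in each interval $(C_{k-1},C_k)$, and these $\ell$ roots exhaust it. Because they vary with the coordinates $r_i$, they are the (generically) nonconstant eigenvalues $\xi_1<\cdots<\xi_\ell$, whereas $\prod_i(t-C_i)^{m_i-1}$ contributes each $C_i$ with $m_i\geq2$ as a constant eigenvalue of multiplicity $m_i-1$; passing to the closure (where some $r_k$ may vanish and a $\xi_k$ merges with $C_k$) relaxes the strict interlacing to~\eqref{eq:ordering}.

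I expect the main obstacle to be the geometric identification of the first paragraph rather than the subsequent algebra: one must check carefully that $A$ is genuinely the $\hat g$-orthogonal compression of $\hat A$, and that under the horizontal-lift identification at $r=1$ the metric and complex structure on $\mathcal H$ really are $g$ and $J$, so that the purely algebraic compression formula and the sign count can be applied verbatim.
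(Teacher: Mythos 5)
Your argument is correct and rests on the same geometric core as the paper's: both identify $A$ with the orthogonal compression $P_{\mathcal H}\hat A|_{\mathcal H}$ of $\hat A$ to the complex hyperplane $\mathcal H=(\C\,\partial_r)^{\perp}$, and both reduce the spectral question to the resolvent quantity $\langle\partial_r,(t\,\Id-\hat A)^{-1}\partial_r\rangle=\tfrac{1}{r^2}\sum_i r_i^2/(t-C_i)$. The genuine difference lies in how the characteristic polynomial is extracted. The paper uses only the \emph{vanishing} of this quantity to locate the eigenvalues of $A$ that are not eigenvalues of $\hat A$; it must then argue separately that each repeated eigenvalue $C_i$ of $\hat A$ survives as a constant eigenvalue of $A$ of multiplicity $m_i-1$ (via the intersection of the eigenspace $E_i$ with $\mathcal H$), and finally count dimensions to confirm that no eigenvalues have been missed before assembling \eqref{eq:eigvalrel}. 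Your cofactor identity $p_A(t)=p_{\hat A}(t)\,\langle\partial_r,(t\,\Id-\hat A)^{-1}\partial_r\rangle$ delivers the full characteristic polynomial, including the factor $\prod_i(t-C_i)^{m_i-1}$ carrying the constant eigenvalues, in a single step, so the eigenspace-intersection argument and the dimension count become superfluous. The interlacing \eqref{eq:ordering} then follows from your sign evaluation of $Q(C_k)$ exactly as the paper obtains it from the monotonicity of $\sum_i r_i^2/(C_i-\xi)$ between its poles, with the weak inequalities arising in both cases by passing to the closure of the locus where all $r_k\neq 0$. The only point worth making explicit is that the determinant in the compression identity must be the complex determinant with respect to the hermitian form $\hat g-i\hat g(\hat J\cdot,\cdot)$ (so that $L=\C\,\partial_r$ is a single deleted row and column and all multiplicities are complex, consistent with $\sum_i m_i=\dim_{\C}\hat M=m+1$); with that understood, your identification $\|P_{E_i}\partial_r\|^2=r_i^2/r^2$ and the rest of the algebra go through verbatim.
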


\begin{rem}
The calculations in the proof of Lemma \ref{lem:eigvalrel} below are analogous
to the derivation of elliptic separation coordinates on the $n$-sphere, see
\cite[Section 7]{Schoebel}.
\end{rem}

\begin{proof}
Recall that $A$ is the horizontal part of $\hat A$. Its action on the
horizontal distribution
\begin{equation*}
\mathcal{H}=\{X\in T \hat M:\hat g(X,\partial_r)=\hat g(X,\hat J\partial_r)=0\}
\end{equation*}
is then given by 
\begin{equation*}
AX=\hat A X-\hat g(\hat A X,\partial_r)\partial_r
-\hat g(\hat A X,\hat J \partial_r)\hat J \partial_r.
\end{equation*}
In particular, if $\xi$ is an eigenvalue of $A$, i.e., $AX=\xi X$ for some
nonzero $X\in \mathcal{H}$, we have $(\hat A-\xi \Id) X=\langle\hat A
X,\partial_r\rangle\partial_r,$ where $\langle\cdot,\cdot\rangle=\hat g-i\hat
g(\hat J\cdot,\cdot)$ denotes the hermitian inner product associated to $\hat
g$. Thus, $\xi$ is an eigenvalue of $A$ if and only if there exists $X\neq 0$
such that
\begin{equation*}
\langle X,\partial_r\rangle =0\mbox{ and }
(\hat A-\xi \Id) X=c\partial_r\mbox{ for some }c\in \C.
\end{equation*}
If $\xi$ is not an eigenvalue of $\hat A$, this condition is equivalent to
$\langle (\hat A-\xi \Id)^{-1}\partial_r,\partial_r\rangle =0.$ Inserting
$\hat A$ given by \eqref{eq:DecompA} and $\partial_r=\sum_{i=0}^\ell
\frac{r_i}{r}\partial_{r_i}$, this equation becomes equal to
\begin{align}\label{eq:eigvalrel1}
\sum_{i=0}^\ell\frac{r_i^2}{C_i-\xi}=0.
\end{align}
We obtain that each eigenvalue $\xi$ of $A$ which is not an eigenvalue of
$\hat A$ must be a solution to this equation. For fixed $r_0,\ldots r_\ell$,
the function $h(\xi)=\sum_{i=0}^\ell\frac{r_i^2}{C_i-\xi}$ has $\ell +1$ poles
at $C_0,\ldots C_\ell$ and is monotonously increasing within the intervals
$(C_i,C_{i+1})$. Hence, it has $\ell$ zeros $\xi_1,\ldots\xi_\ell$ which are the
$\ell$ nonconstant eigenvalues of $A$ depending on $r_0,\ldots r_\ell$. We have
just seen that these eigenvalues have to satisfy the relation
\eqref{eq:ordering}.

On the other hand, if an eigenvalue $C_i$ of $\hat A$ has multiplicity
$m_i\geq 2$, the corresponding eigenspace must have an $m_i-1$ dimensional
intersection with $\mathcal{H}$, hence, $C_i$ is also a constant eigenvalue of
$A$ of multiplicity $m_i-1$. The number of eigenvalues of $A$ found so far is
\begin{equation*}
\ell+\sum_{i=0}^\ell(m_i-1)=-1+\sum_{i=0}^\ell m_i=-1+\dim \hat M=\dim M.
\end{equation*}  
Thus, we certainly found all eigenvalues of $A$. 

Multiplying \eqref{eq:eigvalrel1} with $\prod_{i=0}^\ell(C_i-\xi)$, we obtain
$\sum_{i=0}^\ell r_i^2\prod_{j\neq i}(C_j-\xi)=0$. The left hand side is a
polynomial in $\xi$ of degree $\ell$ and since the nonconstant
eigenvalues $\xi_1,\ldots\xi_\ell$ are the roots of this polynomial, we obtain
\begin{equation*}
p_{\mathrm{nc}}(t)=\frac{1}{r^2}\sum_{i=0}^\ell r_i^2\prod_{j\neq i}(t-C_j),
\end{equation*}
where $p_{\mathrm{nc}}(t)=\prod_{i=1}^\ell(t-\xi_i)$ is the nonconstant part
of the characteristic polynomial of $A$. The characteristic polynomial of $A$
is then given by formula \eqref{eq:eigvalrel}.
\end{proof}

Denote by $\xi_1,\ldots\xi_\ell$ the nonconstant eigenvalues of $A$ and by
$\eta$ its constant eigenvalues of multiplicity $m_\eta$. The characteristic
polynomial $p_A(t)$, expressed in terms of the radial coordinates $r_i$, is
given by \eqref{eq:eigvalrel}; hence, we obtain the relation
\begin{align}\label{eq:eigvalrel2}
\prod_{i=1}^\ell (t-\xi_i)=\frac{1}{r^2}\sum_{I=0}^\ell  r_I^2\prod_{J\neq I}(t-C_J),
\end{align}
between the two sets of functions $\{\xi_1,\ldots\xi_\ell\}$ and $\{r_0,\ldots
r_\ell\}$. Inserting $t=C_I$ into formula \eqref{eq:eigvalrel2}, we obtain the
functions $r_I$ explicitly as functions of the $\xi_i$:
\begin{align}\label{eq:Trafo_r}
r_I^2=\frac{\prod_{i=1}^\ell (C_I-\xi_i)}{\prod_{J\neq I}(C_I-C_J)}.
\end{align}
Differentiating yields
\begin{align}\label{eq:Trafo_dr}
2r_I\d r_I
=-\sum_{i=1}^\ell\frac{\prod_{j\neq i}(C_I-\xi_j)}{\prod_{J\neq I}(C_I-C_J)}\d\xi_i.
\end{align}

\subsection{A local description of $\CC(-1)$-metrics}
\label{subsec:locclass_rederived}

We rederive the part of Theorem \ref{thm:condforCCB} stating necessary
conditions on the parameters from formula \eqref{eq:locclass} for $g$ being
$\CC(-1)$.
\begin{prop}\label{prop:nec}
Consider a $\CC(-1)$ metric $g$ given by formula \eqref{eq:locclass}
w.r.t.\ some $A\in \Sol(g,J)$ with nonconstant eigenvalues
$\xi_1,\ldots\xi_\ell$.  Let $C_0<\cdots<C_\ell$ be the distinct eigenvalues
of the corresponding parallel hermitian endomorphism $\hat A$ on the
cone. Then $\Theta_j(t)=-4\prod_{I=0}^\ell(t-C_I)$ for $j=1,\ldots\ell$.
\end{prop}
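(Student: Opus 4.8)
The plan is to compute the K\"ahler quotient metric \eqref{eq:quotient} explicitly in the eigenvalue coordinates $\xi_1,\dots,\xi_\ell$ and match it term by term against the local classification formula \eqref{eq:locclass}, reading off $\Theta_j$ from the coefficient of $\d\xi_j^2$. The three summands of \eqref{eq:quotient} line up with the three blocks of \eqref{eq:locclass}: the terms $\frac12\sum_{i,j}r_i^2r_j^2(\phi_i-\phi_j)^2$, built from the basic angular forms $\phi_i-\phi_j$ spanning the toric directions, supply the toric fibre part; the terms $\sum_I r_I^2g_I$, coming from the constant-eigenvalue factors, supply the base metric $\sum_\eta p_{\mathrm{nc}}(\eta)g_\eta$; and the purely radial part $\sum_I\d r_I^2$ carries all of the $\d\xi_j$-dependence. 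Since the angular forms and the base metrics $g_I$ contain no $\d\xi$-component while, by \eqref{eq:Trafo_r}, each $r_I$ is a function of $\xi_1,\dots,\xi_\ell$ alone, only $\sum_I\d r_I^2$ contributes to the $\d\xi_j$-part of $g$, and it suffices to rewrite it as a quadratic form in the $\d\xi_j$ and identify its coefficients.

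Substituting \eqref{eq:Trafo_dr} gives $\sum_I\d r_I^2=\frac14\sum_I\frac1{r_I^2}\bigl(\sum_i a^{(I)}_i\,\d\xi_i\bigr)^2$ with $a^{(I)}_i=\frac{\prod_{k\neq i}(C_I-\xi_k)}{\prod_{J\neq I}(C_I-C_J)}$. Using \eqref{eq:Trafo_r} to replace $1/r_I^2$ by $\prod_{J\neq I}(C_I-C_J)/\prod_m(C_I-\xi_m)$ and cancelling common factors, the coefficient of $\d\xi_i\,\d\xi_k$ reduces for $i\neq k$ to $\frac14\sum_{I=0}^\ell\frac{\prod_{m\neq i,k}(C_I-\xi_m)}{\prod_{J\neq I}(C_I-C_J)}$, and the diagonal coefficient of $\d\xi_j^2$ reduces to $\frac14\sum_{I=0}^\ell\frac{\prod_{k\neq j}(C_I-\xi_k)}{(C_I-\xi_j)\prod_{J\neq I}(C_I-C_J)}$.

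Both sums I would evaluate by the residue (partial-fraction) trick, which is the crux of the argument: each summand over $I$ is the residue at $x=C_I$ of a fixed rational function whose only finite poles are the $C_I$ (and, in the diagonal case, $x=\xi_j$). For the off-diagonal term the relevant function is $\prod_{m\neq i,k}(x-\xi_m)\big/\prod_I(x-C_I)$, which is $O(x^{-3})$ at infinity, so its residues sum to zero; as it has no pole at $\xi_i$ or $\xi_k$, the off-diagonal coefficient vanishes and $\sum_I\d r_I^2$ is genuinely diagonal in the $\xi_j$, matching the block form of \eqref{eq:locclass}. For the diagonal term the function $F(x)=\prod_{k\neq j}(x-\xi_k)\big/\bigl((x-\xi_j)\prod_I(x-C_I)\bigr)$ is again $O(x^{-3})$, so the sum of its residues at the $C_I$ equals minus its residue at $x=\xi_j$, namely $-\Delta_j\big/\prod_{I=0}^\ell(\xi_j-C_I)$, where $\Delta_j=\prod_{k\neq j}(\xi_j-\xi_k)$.

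Hence the coefficient of $\d\xi_j^2$ in $g$ equals $\frac{\Delta_j}{-4\prod_{I=0}^\ell(\xi_j-C_I)}$. Comparing with the coefficient $\Delta_j/\Theta_j(\xi_j)$ in \eqref{eq:locclass} yields $\Theta_j(\xi_j)=-4\prod_{I=0}^\ell(\xi_j-C_I)$, and since $\xi_j$ ranges over an interval this is an identity of polynomials, giving $\Theta_j(t)=-4\prod_{I=0}^\ell(t-C_I)$ as claimed. The only real obstacle is the bookkeeping in the two residue computations; once the degree count $O(x^{-3})$ is in place, both the vanishing of the cross terms and the evaluation of the diagonal coefficient follow at once.
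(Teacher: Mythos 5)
Your proof is correct and takes essentially the same route as the paper's: both identify the $\d\xi$-block of the quotient metric \eqref{eq:quotient} with $\sum_I\d r_I^2$, substitute \eqref{eq:Trafo_r}--\eqref{eq:Trafo_dr}, and evaluate the resulting sums over $I$ to find that the cross terms vanish and the diagonal coefficient is $-\Delta_j/\bigl(4\prod_{I=0}^\ell(\xi_j-C_I)\bigr)$. Your sum-of-residues argument is precisely the Vandermonde identity the paper invokes (citing the appendix of \cite{ACG1}), so the two computations coincide.
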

\begin{proof}
The part of the metric $g$ in \eqref{eq:locclass} involving the $\d \xi_i$'s
corresponds to the part $\sum_{I=0}^\ell \d r_I^2$ of $g$ in
\eqref{eq:quotient}.  Using \eqref{eq:Trafo_r} and \eqref{eq:Trafo_dr}, we
obtain
\begin{align}\label{eq:dri}
4\d r_I^2=\frac{\sum_{i_1,i_2=1}^\ell\prod_{j_1\neq i_1}(C_I-\xi_{j_1})
\prod_{j_2\neq i_2}(C_I-\xi_{j_2})\d \xi_{i_1}\otimes \d \xi_{i_2}}
{\prod_{J\neq I}(C_I-C_J)\prod_{i=1}^\ell (C_I-\xi_i)}.
\end{align}
Let $4\sum_{I=0}^\ell\d r_I^2=:A_{i_1i_2}\d \xi_{i_1}\otimes \d \xi_{i_2}$.
For $i_1\neq i_2$, \eqref{eq:dri} implies that
\begin{equation*}
A_{i_1i_2}=\sum_{I=0}^\ell
\frac{\prod_{j\neq i_1,i_2}(C_I-\xi_{j})}{\prod_{J\neq I}(C_I-C_J)}.
\end{equation*}
The numerator of each term in this sum is a polynomial of degree $\ell-2$ in
$C_I$, hence, applying a Vandermonde identity (see, for instance, the appendix
of \cite{ACG1}) in the $\ell+1$ variables $C_0,\ldots C_\ell$, we see that
$A_{i_1 i_2}=0$ for $i_1\neq i_2$. For the case $i=i_1=i_2$, we obtain
\begin{align}\label{eq:Aii}
A_{ii}=\sum_{I=0}^\ell\frac{\prod_{j\neq i}(C_I-\xi_{j})}
{\prod_{J\neq I}(C_I-C_J)(C_I-\xi_{i})}.
\end{align}
The numerator of each term in this sum is a polynomial of degree $\ell-1$ in
$C_I$. Applying Vandermonde identities with respect to the $\ell+2$ variables
$C_0,\ldots C_\ell,\xi_i$, we obtain that
\begin{equation*}
A_{ii} = -\frac{\prod_{j\neq i}(\xi_i-\xi_{j})}{\prod_{I=0}^\ell (\xi_i-C_I)}.
\end{equation*}
Thus we have 
\begin{equation*}
\sum_{i=0}^\ell \d r_i^2 =-\sum_{i=1}^\ell 
\frac{\prod_{j\neq i}(\xi_i-\xi_j)}{4\prod_{I=0}^\ell(\xi_i-C_I)}\d\xi_i^2.
\end{equation*}
Comparing this with \eqref{eq:locclass}, we see that
$\Theta_i(t)=-4\prod_{I=0}^\ell(t-C_I)$ as we claimed.
\end{proof}

\subsection{$\CC(-1)$-metrics with mobility $\geq 3$}
\label{subsec:Dgeq3_rederived}

The cone construction provides a more geometric explanation why the
conditions on the eigenvalues in Theorem \ref{thm:Dgeq3} imply that the
mobility is $\geq 3$: since for a $\CC(-1)$ metric $g$ the space $\Sol(g,J)$
is isomorphic to the space of parallel hermitian endomorphisms on the cone
$(\hat M,\hat g,\hat J)$, the decomposition theorem for riemannian manifolds
\cite{ei} implies that the mobility $D(g,J)=\dim \Sol(g,J)$ is given by
\begin{align}\label{eq:mobility}
D(g,J)=f^2+i,
\end{align}
where $f$ is the complex dimension of the flat part and $i$ is the number of
irreducible (nonflat) components of $\hat g$ (see also \cite{MatRos2}). Let
$C_0\leq\cdots \leq C_n$ denote the (not necessarily distinct) eigenvalues of
a parallel hermitian endomorphism $\hat A$ on $\hat M$ and let $A$ be the
corresponding element of $\Sol(g,J)$. Lemma \ref{lem:eigvalrel} shows that
each repeated eigenvalue $C_{i-1}=C_{i}$ of $\hat A$ gives rise to a constant
eigenvalue of $A$, while each gap $C_{j-1}<C_{j}$ gives rise to a nonconstant
eigenvalue of $A$ taking values in the interval $[C_{j-1},C_j]$. This explains
the assumptions in Theorem \ref{thm:Dgeq3}: if the number of nonconstant
eigenvalues of $A$ is $\geq 2$ or the number of constant eigenvalues of $A$ is
$\geq 3$, the number of distinct eigenvalues of $\hat A$ must be $\geq
3$. Now, given a parallel hermitian endomorphism $\hat A$ on the cone with at
least three distinct eigenvalues, the decomposition theorem, together with
formula \eqref{eq:mobility} show that the mobility is $\geq 3$.

\subsection*{\bf Acknowledgements.}

We are grateful to Mike Eastwood and Katharina Neusser for pointing out a gap
(concerning the constancy of $B$) in an earlier version of the proof of
Theorem~\ref{thm:equivalence}; this result will also appear in~\cite{CEMN}
with alternative proofs of the constancy of $B$.  We are likewise indebted to
the referee for insightful comments which greatly improved the clarity and
scope of the paper. In particular, these observations suggested
Theorem~\ref{thm:orbifold} and the remarks following it.

We would also like to thank the Mathematical Sciences Institute at the
Australian National University for the opportunity to meet at a workshop in
March 2013, and the Deutsche Forschungsgemeinschaft (Research training group
1523 --- Quantum and Gravitational Fields) and FSU Jena for their financial
support.

%\nocite{*}
%\bibliographystyle{plain}

\end{document}